\documentclass[12pt,leqno]{article}

\usepackage{graphicx, fullpage}
\usepackage{amsmath,amssymb,amsthm,amscd, bm}
\usepackage{fancyhdr}
\usepackage[mathscr]{eucal}
\usepackage{amsfonts}
\usepackage[T1]{fontenc}
\usepackage{xspace}
\usepackage{esint}
\begin{document}
\parskip=6pt

\theoremstyle{plain}
\newtheorem{prop}{Proposition}
\newtheorem{lem}[prop]{Lemma}
\newtheorem{thm}[prop]{Theorem}
\newtheorem{cor}[prop]{Corollary}
\newtheorem{defn}[prop]{Definition}
\theoremstyle{definition}
\newtheorem{example}[prop]{Example}
\theoremstyle{remark}
\newtheorem{remark}[prop]{Remark}
\numberwithin{prop}{section}
\numberwithin{equation}{section}

\newenvironment{rcases}
  {\left.\begin{aligned}}
  {\end{aligned}\right\rbrace}


\def\cal{\mathcal}
\newcommand{\cF}{\cal F}
\newcommand{\cG}{\cal G}
\newcommand{\cA}{\cal A}
\newcommand{\cB}{\cal B}
\newcommand{\cC}{\cal C}
\newcommand{\cO}{{\cal O}}
\newcommand{\cE}{{\cal E}}
\newcommand{\cU}{{\cal U}}
\newcommand{\cM}{{\cal M}}
\newcommand{\cD}{{\cal D}}
\newcommand{\cK}{{\cal K}}
\newcommand{\cZ}{{\cal Z}}

\newcommand{\fQ}{\frak{Q}}

\newcommand{\bC}{\mathbb C}
\newcommand{\bP}{\mathbb P}
\newcommand{\bN}{\mathbb N}
\newcommand{\bA}{\mathbb A}
\newcommand{\bR}{\mathbb R}
\newcommand{\oP}{\overline P}
\newcommand{\oQ}{\overline Q}
\newcommand{\oR}{\overline R}
\newcommand{\oS}{\overline S}
\newcommand{\oc}{\overline c}
\newcommand{\bp}{\mathbb p}
\newcommand{\oD}{\overline D}
\newcommand{\oE}{\overline E}
\newcommand{\oC}{\overline C}
\newcommand{\of}{\overline f}
\newcommand{\ou}{\overline u}
\newcommand{\oU}{\overline U}
\newcommand{\oV}{\overline V}
\newcommand{\ow}{\overline w}
\newcommand{\oy}{\overline y}
\newcommand{\oz}{\overline z}

\newcommand{\hg}{\hat G}
\newcommand{\hM}{\hat M}

\newcommand{\tpr}{\widetilde {\text{pr}}}
\newcommand{\tB}{\widetilde B}
\newcommand{\tx}{\widetilde x}
\newcommand{\ty}{\widetilde y}
\newcommand{\txi}{\widetilde \xi}
\newcommand{\teta}{\widetilde \eta}
\newcommand{\tna}{\widetilde \nabla}
\newcommand{\tth}{\widetilde \theta}

\newcommand{\diml}{\text{dim}}
\newcommand{\var}{\varepsilon}
\newcommand{\End}{\text{End }}
\newcommand{\loc}{\text{loc}}
\newcommand{\Symp}{\text{Symp}}
\newcommand{\Sympo}{\text{Symp}(\omega)}
\newcommand{\lam}{\lambda}
\newcommand{\Hom}{\text{Hom}}
\newcommand{\Ham}{{\rm{Ham}}}
\newcommand{\ham}{\text{ham}}
\newcommand{\Ker}{\text{Ker}}
\newcommand{\dist}{\text{dist}}
\newcommand{\psl}{\rm{PSL}}
\newcommand{\rk}{\roman{rk }}
\newcommand{\id}{\text{id}}
\newcommand{\Det}{\text{Det}\,}
\renewcommand\qed{ }
\begin{titlepage}
\title{\bf On the adjoint action of the group of symplectic diffeomorphisms}
\author{L\'aszl\'o Lempert \thanks{Research partially  supported by NSF grant DMS 1764167.
\newline 2020 Mathematics subject classification 53D05, 58D19}\\ Department of  Mathematics\\
Purdue University\\West Lafayette, IN
47907-2067, USA}
\thispagestyle{empty}
\end{titlepage}
\date{}
\maketitle
\abstract
We study the action of Hamiltonian diffeomorphisms of a compact symplectic manifold ($X,\omega$) on $C^\infty(X)$ and 
on functions $C^\infty(X)\to \bR$. We describe various properties of invariant convex functions on $C^\infty(X)$.
Among other things we show that continuous convex functions $C^\infty(X)\to \bR$ that are invariant under the action 
are automatically invariant under so called strict rearrangements and they are continuous in the sup norm topology of $C^\infty(X)$; but this is not generally true if the convexity condition is dropped.
\endabstract

\section{Introduction}

Consider a connected, compact, symplectic manifold $(X,\omega)$, without boundary, of dimension $2n$. According to Omori 
\cite{O}, symplectic self--diffeomorphisms of $X$ form a Fr\'echet--Lie group $\Symp(\omega)$, with Lie algebra the space $\frak{v}(\omega)$ of smooth vector fields on $X$ that are locally Hamiltonian. In this paper we will be interested in the action of $\Sympo$, by pull back, on the Fr\'echet space $C^\infty(X)$ of smooth real functions
\begin{equation}  
\Sympo \times C^\infty(X)\ni(g,\xi)\mapsto\xi\circ g^{-1}\in C^\infty(X),
\end{equation}
and on functions on $C^\infty(X)$ that (1.1) leaves invariant. This action is no adjoint action, but it is close to one. The adjoint action Ad$_g$ of $g\in \Sympo$ is, rather, push forward by $g^{-1}$ of vector fields in $\frak{v}(\omega)$. The
subspace $\ham(\omega)\subset\frak{v}(\omega)$ of globally Hamiltonian vector fields, those that are symplectic gradients  sgrad$\,\xi$ of some $\xi\in C^\infty(X)$, is invariant under Ad$_g$, and (1.1) induces via the projection $\xi\mapsto\text{sgrad}\,\xi$  the restriction of the adjoint action to $\ham(\omega)$.

Other diffeomorphism groups of $X$ also act on $C^\infty(X)$ by pull back. Our focus will be on the subgroup $\Ham(\omega)\subset\Sympo$ of Hamiltonian diffeomorphisms. Hamiltonian diffeomorphisms are the time 1 maps 
 of time dependent Hamiltonian vector fields sgrad$\, \xi_t$, $\xi_t\in C^\infty(X)$. Continuous norms---and also
 seminorms---on the Fr\'echet space $C^\infty(X)$, invariant under $\Ham(\omega)$, are of potential interest in symplectic geometry because they give rise to bi--invariant metrics on
$\Ham(\omega)$, and have been investigated in the past. An obvious norm is $\|\xi\|_\infty =\max_X|\xi|$. That it gives rise to a genuine metric on $\Ham(\omega)$ was proved first by Hofer in $\bR^{2n}$, and in general by Lalonde and McDuff; see 
also Polterovich's book, [Ho, LM, P]. Work by Ostrover--Wagner, Han, and Buhovsky--Ostrover [BO, Ha, OW] gave the following.  Let 
$(X,\mu)$ and $(Y,\nu)$ be measure spaces. We say that measurable functions $\xi:X\to\bR$, $\eta:Y\to\bR$
are equidistributed, or strict rearrangements of each other, if
\[\mu(\xi^{-1}B)=\nu(\eta^{-1}B) \quad\text{ for all Borel sets } B\subset \bR.
\]
When  $\mu(X),\nu(Y)<\infty$, 
this is equivalent to 
$\mu\{x\in X: \xi(x)> t\}=\nu\{y\in Y:\eta(y)>t\}$  for all  $t\in\bR$. We have to use the qualifier `strict', since the notion of 
rearrangement  in harmonic analysis and Banach space theory typically refers to the relation
$\mu\{x\in X: |\xi(x)|> t\}=\nu\{y\in Y:|\eta(y)|>t\}$.
Back to our symplectic manifold $(X,\omega)$, we write $\mu$ for the measure on $X$ defined by $\omega^n$; 
the action (1.1) clearly sends functions on $(X,\mu)$
to their strict rearrangements.

\begin{thm}[{\rm{[BO, H, OW]}}] 
If $\|\ \  \|$ is a $\Ham(\omega)$ invariant continuous seminorm on the Fr\'echet space $C^\infty(X)$, then $\|\xi\|=\|\eta\|$ whenever $\xi,\eta\in C^\infty(X)$ are equidistributed. 
These seminorms satisfy $\|\ \  \|\le c\|\ \  \|_\infty$ with some $c\in(0, \infty)$.
Unless $\|\ \ \|$ and $\|\ \  \|_\infty$ are equivalent, the pseudodistance on $\Ham(\omega)$ induced by $\|\ \ \|$ is 
identically 0.
\end{thm}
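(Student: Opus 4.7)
My plan is to prove the three assertions in sequence, each feeding into the next.

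\emph{Equidistribution invariance.} The plan is to show that whenever $\xi,\eta\in C^\infty(X)$ are strict rearrangements of each other, $\eta$ lies in the $C^\infty$-closure of $\{\xi\circ g^{-1}:g\in\Ham(\omega)\}$; combined with continuity and $\Ham$-invariance of $\|\cdot\|$ this will give $\|\xi\|=\|\eta\|$. By density of Morse functions in $C^\infty(X)$ it suffices to treat the Morse case. For Morse $\xi,\eta$ sharing the same distribution, a Moser-type argument produces a symplectomorphism $g$ with $\xi\circ g=\eta$: after matching critical values by a diffeomorphism of $\bR$, one builds $g$ level by level using Moser's trick on each regular hypersurface $\{\xi=c\}$, and a small symplectic isotopy kills the flux to land $g$ in $\Ham(\omega)$. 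The principal technical difficulty here is interpolating smoothly across critical values.

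\emph{The sup-norm bound.} Continuity of $\|\cdot\|$ on $C^\infty(X)$ gives constants $C,k$ with $\|\xi\|\le C\|\xi\|_{C^k}$. By the previous step, $\|\xi\|$ depends only on the distribution $\mu_\xi$. For each $\xi$ with $\|\xi\|_\infty\le 1$ I will produce an equidistributed smooth $\widetilde\xi$ whose $C^k$-norm is bounded independently of $\xi$: fix a Morse template $\Phi\colon X\to[-1,1]$ and post-compose it with a smooth nondecreasing rearrangement $h\colon[-1,1]\to[-1,1]$ pushing $\mu_\Phi$ to $\mu_\xi$; after mollification, $h$ and its derivatives stay controlled, so $\widetilde\xi:=h\circ\Phi$ has bounded $C^k$-norm, and $\|\xi\|=\|\widetilde\xi\|\le C\|\widetilde\xi\|_{C^k}\le c$.

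\emph{Degeneracy of the pseudodistance.} Conjugating a generating Hamiltonian by $g\in\Ham(\omega)$ replaces it with an equidistributed function of the same $\|\cdot\|$-norm, so the pseudodistance $d_{\|\cdot\|}$ on $\Ham(\omega)$ is bi-invariant, and $N=\{\phi\in\Ham(\omega):d_{\|\cdot\|}(\phi,\id)=0\}$ is a normal subgroup. Banyaga's simplicity of $\Ham(\omega)$ reduces the claim to exhibiting one nontrivial element of $N$. Nonequivalence of $\|\cdot\|$ and $\|\cdot\|_\infty$ supplies $f_n\in C^\infty(X)$ with $\|f_n\|_\infty=1$ and $\|f_n\|\to 0$, so the time-$1$ flows satisfy $d_{\|\cdot\|}(\phi^1_{f_n},\id)\le\|f_n\|\to 0$. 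The main obstacle, and the heart of the argument, is to upgrade this sequential vanishing to a single nontrivial $\phi\in\Ham(\omega)$ with $d_{\|\cdot\|}(\phi,\id)=0$. My plan is to use the sup-norm bound (which, by the preceding step, extends $\|\cdot\|$ continuously to $C^0(X)$) together with the equidistribution freedom from the first step: choose the $f_n$ so that a $C^0$-convergent subsequence is available, take the limit to get a nonzero continuous function of null $\|\cdot\|$, and mollify into a smooth Hamiltonian whose time-$1$ flow is a nontrivial element of $N$.
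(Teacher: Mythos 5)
Your first step contains a fatal error, and since the later steps lean on it the proposal collapses there. You claim that for equidistributed Morse functions $\xi,\eta$ a Moser-type argument produces $g$ with $\xi\circ g=\eta$, so that $\eta$ lies in the $C^\infty$-closure of the $\Ham(\omega)$-orbit of $\xi$. This is false: equidistribution is a purely measure-theoretic condition and places no constraint on the topology of level sets or on the number of critical points. On $S^2$ take $\xi$ to be the height function (two nondegenerate critical points; its pushforward measure is uniform), and build an equidistributed Morse $\eta$ with two separate bumps, hence four nondegenerate critical points. Every $\xi\circ g^{-1}$ has exactly two critical points, while any function $C^2$-close to $\eta$ has at least four (nondegenerate critical points are stable under $C^2$-perturbation), so $\eta$ is not even in the $C^2$-closure of the orbit, and no interpolation across critical values can repair this --- there is simply no diffeomorphism matching the level sets. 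The only approximation actually available is Katok's lemma (the paper's Lemma 3.2): $\eta\circ g_k\to\xi$ in $L^1$ only. The genuine difficulty of the theorem, which your plan defines away, is to bridge the gap between $L^1$-approximation of the argument and mere $C^\infty$-continuity of $\|\ \ \|$. The paper does this by duality and regularization (Lemma 2.1): $p$ is written as a supremum of affine functionals $\xi\mapsto a+\int_X f\xi\,\omega^n$ with \emph{smooth} densities $f$, obtained by averaging distributions over small Hamiltonian flows; each such functional is $L^1$-continuous, so Katok's lemma applies to it, and the supremum inherits the invariance.

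The second step has an independent gap even if rearrangement invariance is granted. The transfer function $h$ pushing $\mu_\Phi$ to $\mu_\xi$ is a quantile function composed with a fixed distribution function and can be forced to be arbitrarily steep (take $\xi$ nearly two-valued, so that $\mu_\xi$ concentrates near two points); there is no uniform $C^k$ bound. Mollifying $h$ at a fixed scale restores the bound but destroys exact equidistribution, and continuity of $\|\ \ \|$ under such weak perturbations of the distribution is essentially what is being proved, so the argument is circular. The paper instead proves a uniform $L^1$ bound on the dual family (Lemmas 4.3--4.5) and concludes with the standard fact that a convex function bounded above on an open set is continuous there (Lemma 4.2). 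Finally, note that the paper does not reprove the third assertion (it is quoted from [BO]); your sketch for it relies on a compactness that does not exist ($\|f_n\|_\infty=1$ gives no $C^0$-convergent subsequence) and, in any case, on the broken steps above.
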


One of our goals in this paper is to offer a simpler proof to the first two statements, in fact in a slightly greater generality: 

\begin{thm}   
Suppose $p:C^\infty(X)\to \bR$ is a continuous, convex function that is invariant under the action of $\Ham(\omega)$. Then $p$ is continuous in the topology of $C^\infty(X)$ induced by $\|\ \  \|_\infty$, and is invariant under strict
rearrangements: $p(\xi)=p(\eta)$ whenever $\xi, \eta$ are equidistributed. 
\end{thm}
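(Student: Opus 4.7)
The plan is to prove the two assertions in sequence: rearrangement invariance first, then sup-norm continuity, since the latter will use the former as an input.

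For rearrangement invariance, given equidistributed $\xi,\eta\in C^\infty(X)$, I would show that $\eta$ lies in the closed convex hull, in the Fr\'echet topology of $C^\infty(X)$, of the $\Ham(\omega)$-orbit $\mathcal{O}_\xi=\{\xi\circ g^{-1}:g\in\Ham(\omega)\}$. Convexity, continuity, and $\Ham(\omega)$-invariance of $p$ then force $p(\eta)\le p(\xi)$, and applying the same argument with the roles of $\xi$ and $\eta$ exchanged yields equality. The technical heart is the approximation claim: for every $C^\infty$-neighborhood $U$ of $\eta$ there exist $g_1,\ldots,g_N\in\Ham(\omega)$ with $\tfrac{1}{N}\sum_{i=1}^N\xi\circ g_i^{-1}\in U$. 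I would prove this by partitioning $X$ into a fine collection of symplectic cells of equal $\mu$-measure on which $\xi$ and $\eta$ are each nearly constant, using the transitivity of $\Ham(\omega)$ on ordered tuples of such cells (a Boothby--Banyaga type input) to move $\xi$'s cells onto $\eta$'s with matching values, and averaging over symmetrizing permutations within each value level to control the higher $C^k$-seminorms coming from cell-boundary errors.

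For sup-norm continuity, a convexity argument reduces the statement to the uniform upper bound $C_M:=\sup\{p(\zeta):\zeta\in C^\infty(X),\ \|\zeta\|_\infty\le M\}<\infty$ for every $M$. Given such a bound, if $\xi_j\to\xi$ in sup norm with $t_j:=\|\xi_j-\xi\|_\infty\to 0$, the decomposition $\xi_j=(1-t_j)\xi+t_j\bigl(\xi+t_j^{-1}(\xi_j-\xi)\bigr)$ combined with convexity gives $p(\xi_j)\le(1-t_j)p(\xi)+t_j C_{\|\xi\|_\infty+1}$, while writing $\xi=\tfrac{1}{2}\xi_j+\tfrac{1}{2}(2\xi-\xi_j)$ yields the matching lower bound, so $p(\xi_j)\to p(\xi)$. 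To prove $C_M<\infty$, one uses that, after rearrangement invariance, $p(\zeta)$ depends only on the distribution $\zeta_*\mu$. Fixing a smooth model $h\in C^\infty(X,[0,1])$ with $h_*\mu$ equal to Lebesgue measure on $[0,1]$ (Moser's theorem), every $\zeta$ with $\|\zeta\|_\infty\le M$ is equidistributed with $F\circ h$ for the quantile $F:[0,1]\to[-M,M]$ of $\zeta$; the aim is to realize $F\circ h$ as a limit of convex combinations of Hamiltonian translates of elements of a fixed $C^\infty$-precompact subset $\mathcal{B}_M\subset C^\infty(X)$ on which $p$ is bounded.

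The principal obstacle is exactly this last step: the quantile $F$ of a general smooth $\zeta$ need not be smooth, and even when smooth its derivatives may blow up near critical values of $\zeta$, so the naive family $\{F_\zeta\circ h\}_\zeta$ is not $C^\infty$-bounded. I would handle this by first approximating $\zeta$ in $C^\infty$ by Morse functions (using $C^\infty$-continuity of $p$) and, within each Morse class, decomposing the representative as a convex combination of Hamiltonian translates of a fixed finite family of smooth ``standard profile'' functions depending only on $M$; convexity then bounds $p(\zeta)$ by the $p$-values on this finite family. Executing this decomposition cleanly, so that it interacts correctly with both the rearrangement invariance and the $C^\infty$-continuity of $p$, is the delicate core of the argument.
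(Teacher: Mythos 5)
Your high-level architecture is reasonable, but both of the steps you yourself flag as the ``technical heart'' and the ``delicate core'' are genuine gaps, and the methods you sketch for them would not go through as described. For rearrangement invariance, you need $\eta$ in the closed convex hull of the orbit of $\xi$ \emph{in the Fr\'echet topology of $C^\infty(X)$}, because that is the only topology in which $p$ is known to be continuous at that stage. Your cell-permutation construction can make $\tfrac1N\sum\xi\circ g_i^{-1}$ close to $\eta$ in $L^1$ (this is essentially Katok's Basic Lemma), but not in $C^k$ for $k\ge1$: the Hamiltonian diffeomorphisms that permute cells of diameter $\delta$ necessarily have derivatives of order $\delta^{-1}$ or worse, so each summand has huge $C^k$ seminorms near cell boundaries, and averaging does not help because $C^k$ seminorms are sup-type and are not reduced by confining the error to a set of small measure. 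The statement you want is in fact true, but the only accessible proof of it is by Hahn--Banach duality, which is exactly how the paper restructures the argument: $p$ is written as $\sup\{a+\int f\xi\omega^n\}$ over a family of \emph{smooth} linear functionals (Lemma 2.1, which requires regularizing arbitrary distributions inside the convex hull of their $\Ham(\omega)$-orbit using Hamiltonian flows, Lemmas 2.2--2.3), and then only $L^1$-approximation of $\eta$ by $\xi\circ g_k$ is needed, which Katok's lemma supplies. Without this dualization your argument needs a $C^\infty$-approximation result you have not proved and cannot prove by the route you describe.

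For sup-norm continuity, your reduction from $C_M<\infty$ to continuity is fine, but the proof of $C_M<\infty$ is exactly where the content is, and your plan---represent each $\zeta$ with $\|\zeta\|_\infty\le M$ as a limit of convex combinations of Hamiltonian translates of a fixed $C^\infty$-precompact family---faces the obstruction you name (non-smooth quantiles, derivative blow-up at critical values) and is not executed; ``standard profiles depending only on $M$'' for arbitrary Morse classes is not a construction. The paper avoids the primal decomposition entirely: having written $p(\xi)=\sup_{f\in\mathcal F}\int f\xi$ with $\mathcal F$ a $\Ham(\omega)$-invariant family, it proves (Lemma 4.3, via the averaging Lemmas 4.4--4.5 and Katok again) that the pointwise finiteness of these suprema forces $\sup_{f\in\mathcal F}\int_X|f|<\infty$, which immediately gives $|p(\xi)|\le C\|\xi\|_\infty+\mathrm{const}$ and hence boundedness on sup-norm balls; the general (non-homogeneous) case is then handled with a Minkowski-functional trick. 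I would recommend reorganizing your proof around the dual family of linear functionals rather than around direct approximation of functions.
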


The point is not the modest gain in generality, which can easily be achieved once Theorem 1.1 is known (for example 
along the lines of  the proof
of Theorem 4.1 below). Rather, it is the simplification of the proof. This is how the two proofs compare.
\cite{OW} first proved that any $\Ham(\omega)$ invariant seminorm $\|\ \ \|\le c\|\ \ \|_\infty$
is invariant under volume preserving diffeomorphisms. Han in \cite{Ha} 
subsequently strengthened this to invariance under strict rearrangements. 
All this is obtained as a consequence of a lemma of Katok \cite[Section 3]{K}. The final step is in \cite{BO}, that takes an arbitrary continuous $\Ham(\omega)$ invariant seminorm $\|\ \ \|$ on $C^\infty(X)$, and proves by an involved argument that $\|\ \ \|\le c\|\ \ \|_\infty$.

We obtain the simplification by restructuring the proof. First we prove that $p$ in Theorem 1.2 is a limit point of the set of $\Ham(\omega)$ invariant functions $q$ that are continuous in the $L^1$ topology on $C^\infty(X)$. This depends on studying linear forms on $C^\infty(X)$, i.e., distributions, and regularizing them using the action of $\Ham(\omega)$. Katok's lemma now gives that the functions $q$ are invariant under strict rearrangements, whence so must be their limit point $p$. Another application of Katok's lemma, combined with  real analysis type arguments then gives the continuity of $p$ with respect to $\|\ \ \|_\infty$. 

Continuity of $p$ with respect to $\|\ \ \|_\infty$ in Theorem 1.2 is essentially an upper estimate of $p$.  We will also prove
a lower estimate:

\begin{thm} 
Let $p\colon C^\infty(X)\to\bR$ be $\Ham(\omega)$ invariant,
convex, and continuous.  Then 
either

(i) $p(\xi)=p_1(\int_X\xi\omega^n)$, where $p_1\colon\bR\to\bR$ is convex; 
or
  
(ii) there are $a\in\bR$, $b\in(0,\infty)$ such that
\[
p(\xi)\ge a+b\int_X|\xi|\omega^n\quad\text{ if }\quad\begin{cases} \int_X\xi\omega^n=0,{\text{ or}}\\ 
 \int_X\xi\omega^n\ge0\ \text{ and }\ \lim_{\bR\ni \lambda\to\infty}p(\lambda)=\infty,\quad{\text{ or}} \\
\int_X\xi\omega^n\le 0\ \text{ and }\ \lim_{\bR\ni \lambda\to-\infty}p(\lambda)=\infty.\end{cases}
\]
If $p$ is positively homogeneous ($p(c\xi)=cp(\xi)$ for positive constants $c$), then $a=0$. 
\end{thm}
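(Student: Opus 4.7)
The plan is to combine three ingredients drawn from the hypotheses: monotonicity of $p$ under majorization, an invariant-subgradient lower bound of the form $p(\xi)\ge p_1(\int_X\xi\,\omega^n)$, and an analysis of $p$ on symmetric two-valued functions.  First, by Theorem~1.2 the function $p$ is rearrangement-invariant and (after extension) continuous in $\|\ \ \|_\infty$; combined with convexity and the Hardy--Littlewood--P\'olya theorem this yields that $p$ is monotone under majorization: $\int_0^s\eta^*(r)\,dr\le\int_0^s\xi^*(r)\,dr$ for every $s\in[0,1]$ (equality at $s=1$) implies $p(\eta)\le p(\xi)$, because such $\eta$ lies in the closed convex hull of the rearrangement orbit of $\xi$.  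Second, since constants are $\Ham(\omega)$-fixed, the subdifferential $\partial p(c)$ is $\Ham$-invariant and by weak-$*$ compactness contains an invariant element, necessarily of the form $\xi\mapsto k_c\int_X\xi\,\omega^n$; this yields $p(\xi)\ge p_1(\int_X\xi\,\omega^n)$ with $p_1(c):=p(c)$.

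I would then write $V_0:=\{\xi:\int_X\xi\,\omega^n=0\}$ (normalising $\int\omega^n=1$) and, for each $L\ge 0$, let $\sigma_L$ denote a smooth function whose distribution approximates the two-valued function equal to $+L$ on a half-volume set and $-L$ on its complement, the limit value $p(\sigma_L)$ being unambiguous by continuity in $\|\ \ \|_\infty$.  A direct computation, using only that $\xi_0^*$ is decreasing with zero mean, shows $\sigma_L\prec\xi_0$ iff $L\le 2\int_0^{1/2}\xi_0^*$, and this integral is at least $\tfrac12\int_X|\xi_0|\,\omega^n$; hence $p(\xi_0)\ge p(\sigma_{\int|\xi_0|/2})$ for every $\xi_0\in V_0$.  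Now the dichotomy: if $p|_{V_0}$ is constant, the convexity bound $p(\sigma_L+c)\le\tfrac12(p(2\sigma_L)+p(2c))=\tfrac12(p(0)+p_1(2c))$ together with the majorization-derived $p(\sigma_L+c)\ge p_1(c)$ shows that the convex, non-decreasing function $L\mapsto p(\sigma_L+c)$ is bounded above and therefore constant; a further majorization step (using $\sigma_L\succ\xi_0$ for $L\ge\|\xi_0\|_\infty$) gives $p(\xi_0+c)=p_1(c)$ for every $\xi_0\in V_0$, which is case~(i).  Otherwise $p(\xi_1)>p(0)$ for some $\xi_1\in V_0$; choosing $L_0\ge\|\xi_1\|_\infty$ gives $\sigma_{L_0}\succ\xi_1$, so $p(\sigma_{L_0})\ge p(\xi_1)>p(0)$, and convexity of $L\mapsto p(L\sigma_1)$ yields $p(\sigma_L)\ge p(0)+L\delta$ for $L\ge L_0$ with $\delta>0$.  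Combined with $p(\xi_0)\ge p(\sigma_{\int|\xi_0|/2})$, this produces $p(\xi_0)\ge a+b\int_X|\xi_0|\,\omega^n$ on $V_0$ with $b=\delta/2$ and a suitable $a\le p(0)$, handling the mean-zero case of~(ii).

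For $\xi=\xi_0+c$ with $c\ne 0$, the same majorization argument gives $p(\xi)\ge F(\int|\xi_0|/2,c)$, where $F(L,c):=p(\sigma_L+c)$ is jointly convex on $[0,\infty)\times\bR$.  Under the hypothesis $c\ge 0$ and $p_1(\lambda)\to\infty$ as $\lambda\to\infty$, convexity of $p_1$ forces $p_1(c)\ge a_1+b_1c$ on $[0,\infty)$ with $b_1>0$; combining this with $F(L,0)\ge p(0)+(\delta/2)L$ and exploiting joint convexity of $F$ produces a linear lower bound $F(L,c)\ge a+b(L+c)$ on the quadrant, which together with $\int_X|\xi|\,\omega^n\le 2L+c$ yields the claimed $p(\xi)\ge a'+b'\int_X|\xi|\,\omega^n$.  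The case $c\le 0$ with $p_1(\lambda)\to\infty$ at $-\infty$ is symmetric.  Finally, if $p$ is positively homogeneous, then $p(0)=0$ and $p(\sigma_L)=L\,p(\sigma_1)$, so $L_0$ may be taken to $0$ and the additive constant vanishes, giving $a=0$.  The most delicate step I expect is the last combination: extracting a uniform linear lower bound for $F$ on the quadrant from the axis-bounds on $F(L,0)$ and $F(0,c)$.  This requires showing the right derivative $\partial_c^+F(L,0)$ is non-negative for the relevant $L$, which I plan to establish by averaging a subgradient of $p$ at $\sigma_L$ over the stabilizer of $\sigma_L$ in $\Ham(\omega)$---which acts transitively on each of the two half-volume level sets of the limiting two-valued function---and using the growth of $p_1$ at $+\infty$ to control the constant-direction component of the averaged invariant subgradient.
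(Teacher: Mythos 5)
Your overall architecture (a primal majorization argument: reduce to mean--zero functions, compare with symmetric two--valued profiles $\sigma_L$, then reinstate the mean) is genuinely different from the paper's, but two steps do not go through as written. The foundational one is the claim that $\eta\prec\xi$ implies $p(\eta)\le p(\xi)$ ``because $\eta$ lies in the closed convex hull of the rearrangement orbit of $\xi$.'' The convex hull of the $\Ham(\omega)$--orbit captures majorized functions only after closure in $L^1$ (or weakly): Lemma 3.2 realizes strict rearrangements only as $L^1$--limits of the orbit, and the Birkhoff/Ryff averaging that produces a majorized $\eta$ requires a further $L^1$--limit of convex combinations. Convexity plus $\|\ \ \|_\infty$--continuity bounds $p$ on the convex hull itself, not on its $L^1$--closure; upper semicontinuity of $p$ along bounded a.e.--convergent sequences is exactly the ``strong continuity'' that must be \emph{assumed} in Theorems 5.2 and 5.4 and does not follow from sup--norm continuity (consider $p=\|\ \ \|_\infty$). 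Every one of your comparison inequalities ($p(\xi_0)\ge p(\sigma_{L})$, $p(\sigma_{L_0})\ge p(\xi_1)$, $p(\sigma_L+c)\ge p_1(c)$), and even the assertion that $p(\sigma_L)$ is well defined for the discontinuous two--valued limit, rests on this. The paper avoids the issue by never taking limits on the primal side: it writes $p(\xi)=\sup_{(a,f)\in\mathcal{A}}\big(a+\int f\xi\big)$ (Lemma 2.1) and performs all averaging and rearrangement--closure arguments on the dual functions $f$ (Lemmas 4.4 and 4.5), where only $L^1$--continuity of the linear functional $f\mapsto\int f\xi$ is needed.

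Second, even granting majorization monotonicity, the final assembly fails: joint convexity of $F(L,c)$ together with linear lower bounds on the two coordinate axes does not give a linear lower bound on the quadrant --- $F(L,c)=L+c-2\sqrt{Lc}$ is convex, grows linearly on each axis, and vanishes on the diagonal. You flag this as the delicate point and propose to prove $\partial_c^+F(L,0)\ge0$ by averaging a subgradient of $p$ at $\sigma_L$ over the stabilizer of $\sigma_L$; but that is not yet an argument (the stabilizer is a noncompact group with no invariant mean supplied, and no reason is given why the constant component of the averaged subgradient is nonnegative). The same objection applies to your earlier appeal to ``weak--$*$ compactness'' to produce a $\Ham(\omega)$--invariant element of $\partial p(c)$: no applicable fixed--point theorem is cited, though this particular step can be repaired by the paper's Lemma 4.5 (average a smooth subgradient over all of $X$). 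The paper's Lemma 6.1 resolves the quadrant problem by a short algebraic elimination: when $\lim_{\lambda\to\infty}q(\lambda)>q(0)$ one may choose $(a,f)\in\mathcal{A}$ nonconstant with $\int f>0$, and then the two inequalities $q(\xi)\ge a+s_1\int\xi$ and $q(\xi)\ge a+c\int|\xi|-m\int\xi$ combine with positive weights to eliminate $\int\xi$. I would suggest recasting your argument in that dual picture; your profiles $\sigma_L$ then reappear as the two--valued dual test functions $f''$ of Lemma 4.4.
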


In particular, if $p$ is a norm, then it dominates $L^1$ norm, something that [OW] also found (cf. Proposition 6.1 there and
its proof).   

Above we have insisted on the difference between rearrangements and strict rearrangements. Nevertheless, Theorem 
1.3 implies that in our setting
the difference between the two is minimal. The notion of rearrangement invariant Banach spaces in the next theorem
is defined in \cite{BS}, 
see also section 6.

\begin{thm}
Given a $\Ham(\omega)$ invariant continuous norm $p$ on $C^\infty(X)$, there is a rearrangement invariant Banach 
function space on $X$ whose norm, restricted to $C^\infty(X)$, is equivalent to $p$.
\end{thm}

A natural question is whether Theorem 1.2 holds for all continuous $\Ham(\omega)$ invariant functions $p$, independently
of convexity. It does not:

\begin{thm}  
If $\dim X \ge 4$, there is a smooth $\Ham(\omega)$ invariant function $p:C^\infty(X)\to\bR$ that is not invariant under volume preserving diffeomorphisms $X\to X$.
\end{thm}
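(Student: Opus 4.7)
My plan is to build $p$ by summing, over the non-degenerate critical points of $\xi$, a symplectic conjugacy invariant of the linearization of $\text{sgrad}\,\xi$. At a non-degenerate critical point $x$ of $\xi$, the Hessian $\text{Hess}_x\xi$ is an intrinsic nondegenerate symmetric bilinear form on $T_xX$, and the endomorphism
\[
A_x(\xi)=\omega_x^{-1}\circ\text{Hess}_x\xi\,\in\,\mathfrak{sp}(T_xX,\omega_x)
\]
is the linearization of $\text{sgrad}\,\xi$ at $x$. Symplectomorphisms act on $A$ by conjugation, $A_{g(x)}(\xi\circ g^{-1})=dg(x)\,A_x(\xi)\,dg(x)^{-1}$ for $g\in\Sympo$, so every conjugation-invariant scalar in $A$ (for instance $\text{tr}(A^2)$) is $\Ham(\omega)$-invariant. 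A volume-preserving non-symplectic $h$ instead yields $A_{h(x)}(\xi\circ h^{-1})=\omega^{-1}\,dh(x)^{-T}\,\text{Hess}_x\xi\,dh(x)^{-1}$, which is generically not similar to $A_x(\xi)$. A direct $4\times 4$ Darboux computation with $\text{Hess}=I$ and $dh(x)=I+E_{13}$ gives $\text{tr}(A^2)=-4$ vs.\ $-6$, the change being possible precisely because $\text{Sp}(2n,\bR)\subsetneq\text{SL}(2n,\bR)$ for $n\geq2$.

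Concretely, I would take
\[
p(\xi)=\sum_{x\,:\,d\xi(x)=0}W(A_x(\xi))\cdot\text{tr}(A_x(\xi)^2),\qquad W(A)=\exp\!\bigl(-1/(\det A)^2\bigr),
\]
with $W$ extended by $0$ when $\det A=0$ and the summand understood to be $0$ at degenerate critical points. On compact $X$ only isolated non-degenerate critical points contribute (at any non-isolated critical point the Hessian has a zero eigenvalue along the critical set), so the sum is finite. Hamiltonian invariance is immediate from the equivariance of $A_x$ and conjugation invariance of the integrand; note that the weight itself is actually volume-invariant (since $\det dh=1$), so any failure of volume-invariance comes entirely through $\text{tr}(A^2)$. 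To exhibit that failure, take $\xi_0$ Morse with a critical point $x_0$ and produce $h$ supported in a small Darboux neighborhood of $x_0$ with $dh(x_0)\in\text{SL}(T_{x_0}X)\setminus\text{Sp}(T_{x_0}X,\omega_{x_0})$, by flowing along a compactly supported divergence-free vector field vanishing at $x_0$; only the summand at $x_0$ changes, and by the previous paragraph it does change.

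The main obstacle is smoothness of $p$ on all of $C^\infty(X)$. On the open set of Morse functions the critical points vary smoothly with $\xi$ by the implicit function theorem, so $p$ is visibly smooth there. The delicate point is bifurcations of the critical set (folds, cusps, \dots), where individual critical points do not move smoothly in $\xi$ and can appear or disappear in pairs. However, at every such coalescence the critical points merge into a degenerate one, so $\det A\to0$; in the fold normal form $\xi=y_1^3+ty_1+\tfrac12\sum_{i\geq2}\epsilon_iy_i^2$, for instance, $|\det A|\sim\sqrt{|t|}$ and hence $W\sim\exp(-C/|t|)$, so the summand and each of its derivatives extend smoothly by $0$ across the bifurcation. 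A standard normal-form analysis of critical-point bifurcations combined with this infinite-order vanishing of $W$ yields $C^\infty$-smoothness of $p$ on $C^\infty(X)$; this local-to-global smoothness check is the one nontrivial technical step.
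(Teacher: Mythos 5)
Your overall strategy is the same as the paper's: sum a conjugation invariant of the linearized Hamiltonian vector field (equivalently, of the Hessian viewed as a quadratic form on the symplectic tangent space) over critical points, weighted by a function of the determinant of the Hessian, and detect non-invariance under $\mathrm{SL}\setminus\mathrm{Sp}$ via $\mathrm{tr}(A^2)$. The non-invariance half of your argument is sound. But there is a genuine gap in the definition and smoothness of $p$, and it traces back to your choice of weight $W(A)=\exp(-1/(\det A)^2)$, which is strictly positive whenever $\det A\ne 0$. First, finiteness of the sum fails: your claim that only finitely many nondegenerate critical points can occur is not justified. Nondegenerate critical points are isolated, but a smooth function on a compact manifold can have infinitely many of them, accumulating at a degenerate critical point (already on $S^1$, a function locally of the form $e^{-1/s^2}\cos(1/s)$ does this). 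Along such a sequence $\det A\to 0$, so the individual terms tend to $0$, but nothing in your argument controls how many critical points have $|\det A|$ in a given dyadic range, so convergence of the series is not established — and for a weight that is merely flat at $0$ rather than identically zero near $0$, I do not see how to establish it for arbitrary smooth $\xi$. Second, your smoothness argument appeals to ``a standard normal-form analysis of critical-point bifurcations,'' but degenerate critical points of smooth functions are not classifiable by finitely many normal forms; the fold computation covers only the generic codimension-one collision and says nothing about perturbations of a highly degenerate critical point, which can spawn arbitrarily many nondegenerate critical points whose total contribution you would need to bound together with all its $\xi$-derivatives.

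The paper's proof avoids both problems with one modification that you should adopt: replace $W$ by a cutoff $\varphi(\Det Q)$ with $\varphi\equiv 0$ on $[-1/2,1/2]$ (and $\varphi(s)=s$ for $|s|\ge 1$, so the summand is still not $\mathrm{SL}$-invariant). Then only critical points with $|\Det Q_x|>1/2$ contribute; for fixed $\xi$ these form a discrete compact, hence finite, set, so the sum is manifestly finite. For smoothness near a given $\eta$, one covers the critical set of $\eta$ by finitely many neighborhoods: around each critical point with $|\Det Q|$ bounded away from the cutoff threshold from above, the critical point persists and varies smoothly with $\xi$; around the remaining critical points, every critical point of every nearby $\xi$ has $|\Det Q|<1/2$ and contributes exactly zero. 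No bifurcation analysis is needed. With this change your construction goes through and coincides with the paper's.
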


The last statement of Theorem 1.1 suggests that, after all, the only invariant norm on $C^\infty(X)$ that is of interest for symplectic geometry, is Hofer's norm $\|\ \ \|_\infty$. However, all invariant norms are of interest for K\"ahler geometry. The groups $\Sympo$ and $\Ham(\omega)$ can be regarded as symmetric spaces. When $(X,\omega)$ is K\"ahler, 
Donaldson, Mabuchi, and Semmes proposed that the infinite dimensional manifold $\cal{H}_\omega$ of relative K\"ahler potentials, endowed with a natural connection on its tangent bundle, should be viewed as the dual symmetric space, at least in a formal sense;
 see [Do, M, S1, S2]. $\Ham(\omega)$ invariant norms on $C^\infty(X)$ induce Finsler metrics on $\cal{H}_\omega$ that are invariant under parallel transport, and, perhaps surprisingly, all these Finsler metrics
induce genuine metrics on $\cal{H}_\omega$. Mabuchi was the first to study such a metric, associated with $L^2$-norm $\|\xi\|=(\int_X|\xi|^2\omega^n)^{1/2}$; more recently, Darvas in \cite{Da} introduced various Orlicz norms on $C^\infty(X)$ and the induced metrics on $\cal{H}_\omega$. Generalizing Darvas's norms and metrics, in \cite{L} we study general 
 $\Ham(\omega)$ invariant Lagrangians and the associated action on $\cal H_\omega$, and most  results
here are motivated by the needs of that paper.
\section{Reduction to linear forms}  

In this section $(X,\omega)$ can be any $2n$ dimensional symplectic manifold, not necessarily compact. The space of compactly supported smooth functions on $X$ will be denoted $\cal{D}(X)$, with its usual locally convex inductive limit topology. Its dual is $\cal{D}'(X)$, the space of distributions. The group $\Ham_0(\omega)$, time 1 maps of compactly supported
Hamiltonian flows, acts on $\cal{D}(X)$ by pull back and on $\cal{D}'(X)$ by push forward. We denote the pairing between $\cal{D}'(X)$ and $\cal{D}(X)$ by $\langle\ ,\ \rangle$. The locally convex topology of $\cal{D}'(X)$ is generated by the seminorms $\| f \|'_\xi=|\langle f,\xi\rangle |$ with $\xi\in \cal{D}(X)$. 
Integration against any smooth $2n$--form defines a distribution. Such distributions will be called smooth. If $h\in \cal{D}'(X)$, 
we denote by conv$(h)$ the closed convex hull of the $\Ham_0(\omega)$ orbit of $h$.

The main result of this section is
\begin{lem}  
Suppose $p: \cal{D}(X)\to \bR$ is a $\Ham_0(\omega)$ invariant, continuous, convex function. There is a  family $\cal{A}\subset \bR\times C^\infty(X)$ such that
\begin{equation}  
p(\xi)=\sup\Big\{a+\int_X f\xi\omega^n : (a,f)\in \cal{A}\Big\}, \quad\text{ for all }\ \xi\in \cal{D}(X).
\end{equation}
If $p$ is positively homogeneous as well $(p(c\xi)=c p(\xi) \text{ for } 0<c<\infty)$, then $\cal{A}$ can be chosen in $\{0\}\times C^\infty(X)$.
\end{lem}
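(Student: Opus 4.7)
The first reflex is Hahn--Banach: every continuous convex function on a locally convex space is the pointwise supremum of its continuous affine minorants. Each such minorant on $\cal D(X)$ has the form $\xi\mapsto a+\langle\ell,\xi\rangle$ with $a\in\bR$ and $\ell\in\cal D'(X)$, obtained by strictly separating in $\cal D(X)\times\bR$ any point lying below the epigraph of $p$ from that closed convex set. Writing $\cal C\subset\bR\times\cal D'(X)$ for the set of all such minorants, we have $p(\xi)=\sup\{a+\langle\ell,\xi\rangle:(a,\ell)\in\cal C\}$. Invariance of $p$ under $\Ham_0(\omega)$ entails that $(a,\phi_*\ell)\in\cal C$ whenever $(a,\ell)\in\cal C$ and $\phi\in\Ham_0(\omega)$, since $a+\langle\phi_*\ell,\xi\rangle=a+\langle\ell,\xi\circ\phi\rangle\le p(\xi\circ\phi)=p(\xi)$. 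Because $\cal C$ is also convex and weak-$*$ closed, it follows that $\{a\}\times\mathrm{conv}(\ell)\subset\cal C$ for every $(a,\ell)\in\cal C$. Everything now reduces to the following \emph{smoothing claim}: for each $\ell\in\cal D'(X)$, each $\xi_0\in\cal D(X)$, and each $\varepsilon>0$, there exists $f\in C^\infty(X)$ with $f\omega^n\in\mathrm{conv}(\ell)$ and $\bigl|\int_X f\xi_0\,\omega^n-\langle\ell,\xi_0\rangle\bigr|<\varepsilon$. Granting this, the set $\cal A:=\{(a,f)\in\bR\times C^\infty(X):(a,f\omega^n)\in\cal C\}$ satisfies (2.1): the inequality $\le$ is automatic from $(a,f\omega^n)\in\cal C$, while for the reverse, given $\xi_0$ and $\varepsilon>0$, Hahn--Banach supplies $(a,\ell)\in\cal C$ with $a+\langle\ell,\xi_0\rangle>p(\xi_0)-\varepsilon$, and the smoothing claim then supplies $f$ with $(a,f)\in\cal A$ and $a+\int_X f\xi_0\omega^n>p(\xi_0)-2\varepsilon$.

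To prove the smoothing claim I would work in Darboux charts. Near any point of $X$ one has coordinates $(x_1,\dots,y_n)$ in which the coordinate functions are Hamiltonians whose flows are translations; after multiplying by a bump function these extend to compactly supported Hamiltonians on $X$ whose time-$t$ flows agree, on a smaller subchart $V$ and for $t$ small, with honest translations. Averaging $\ell$ over such a $2n$-parameter family of Hamiltonian diffeomorphisms, weighted by a smooth, nonnegative, unit-mass density $\rho$ of very small support, is a limit of Riemann-sum convex combinations of $\Ham_0$-orbit translates of $\ell$, so lies in $\mathrm{conv}(\ell)$; a direct computation in coordinates identifies this average on $V$ with the convolution of $\ell$ by $\rho$, hence with a smooth density there. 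Iterating over a locally finite Darboux cover of $X$---and arranging the successive steps so that, outside a fixed neighborhood of the compact set $\mathrm{supp}\,\xi_0$, only Hamiltonians with support disjoint from $\mathrm{supp}\,\xi_0$ are used (which leaves $\langle\cdot,\xi_0\rangle$ unchanged), while inside that neighborhood the weights $\rho$ are chosen small enough to make the total weak-$*$ perturbation less than $\varepsilon$---and then passing if necessary to a weak-$*$ limit inside the closed set $\mathrm{conv}(\ell)$, produces the required globally smooth $f\omega^n$.

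The homogeneous case is an easy afterthought: if $p(c\xi)=cp(\xi)$ for all $c>0$, the inequality $a+c\langle\ell,\xi\rangle\le cp(\xi)$ applied at $\xi=0$ gives $a\le 0$, and dividing by $c$ and letting $c\to\infty$ gives $\langle\ell,\xi\rangle\le p(\xi)$; hence the minorant $(0,\ell)$ already dominates $(a,\ell)$, and one may restrict the whole construction to $a=0$, producing $\cal A\subset\{0\}\times C^\infty(X)$. The serious obstacle in this plan is the smoothing claim, in two parts: first, verifying that the local chart-averaging genuinely outputs a smooth density in the chart, and second, organizing these local smoothings along an exhaustion of $X$ to yield a \emph{globally} smooth element of $\mathrm{conv}(\ell)$ while controlling the weak-$*$ perturbation when tested against the fixed $\xi_0$. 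The remaining steps---Hahn--Banach, invariance, assembly, and the homogeneous reduction---are standard.
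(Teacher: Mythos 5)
Your proposal is correct and follows essentially the same route as the paper: Hahn--Banach affine minorants, enlargement of the minorant family using invariance to the closed convex hulls of $\Ham_0(\omega)$ orbits, and your ``smoothing claim'' is exactly the paper's Lemmas 2.2--2.3 (regularization by averaging over the translation flows of cut-off Darboux coordinate Hamiltonians, iterated over a locally finite cover), with your homogeneous-case reduction being a pleasant elementary substitute for the paper's citation of the sublinear Hahn--Banach theorem. The one point to be careful about in your sketch is the final ``passing to a weak-$*$ limit'': a weak-$*$ limit of smooth densities need not be smooth, so the iteration must be arranged to stabilize locally on an exhaustion of $X$ (the paper freezes $h_j$ on an increasing sequence of compact sets $C_j$), which your phrase ``organizing these local smoothings along an exhaustion'' gestures at but should be made explicit.
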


For the proof we need certain regularization maps $\cal{D}'(X)\to \cal{D}'(X)$. Let $U\subset\subset X $ be open, and assume that 
on a neighborhood of $\oU$ there are local coordinates $x_\nu$ in which $\omega$ takes the form 
$\sum^n_1 dx_\nu\wedge dx_{n+\nu}$. Let $C\subset X\setminus\oU$  be compact. Fix $\varphi_\nu\in \cal{D}(X)$, 
$\nu=1,\dots, 2n$,  vanishing on a neighborhood of $C$, such that $\varphi_\nu=x_\nu$ in a neighborhood of $\oU$. Let 
$g^\tau_\nu, \tau\in\bR$, denote the Hamiltonian flow of 
$\varphi_\nu$ for $\nu\le n$ and of $-\varphi_\nu$ for $\nu>n$; i.e., the flow of the vector fields $\pm\text{sgrad} \,\varphi_\nu$. 
If $t=(t_1,\ldots,t_{2n})\in\bR^{2n}$, put 
\[
g^t=g_1^{t_1}\circ g^{t_2}_2\circ \dots \circ g^{t_{2n}}_{2n}.
\]
Near $C$ we have $g^t=\id$; on $U$, for small $t$, $g^t(x)=x-t$. Let furthermore $\chi\in \cal{D}(\bR^{2n})$ be nonnegative, 
$\int_{\bR^{2n}}\chi(t) dt_1\dots dt_{2n}=1$. For $\lambda\in(0,\infty)$
define operators $R_\lambda:\cal{D}'(X)\to \cal{D}'(X)$ by
\[
R_\lambda h=\lambda^{2n}\int_{\bR^{2n}}\chi(\lambda t)(g^t_* h) dt_1\dots dt_{2n}\in \text{conv}(h),\quad h\in \cal{D}'(X).
\]

Standard properties of convolutions imply
\begin{lem}  
$\lim_{\lambda\to\infty}R_\lambda h=h$ for $h\in\cal{D}'(X)$.
If the support of $\chi$ is sufficiently close to 0, then $R_\lambda h\in\text{conv}(h)$ is smooth on $U$ and $R_\lambda h=h$ on a neighborhood of $C$. Furthermore, if 
$V\subset\subset W\subset X$ are open, and $h$ is smooth on $W$, then $R_\lambda h$ is smooth on $V$ for sufficiently large $\lambda$.
\end{lem}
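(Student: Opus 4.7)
My plan is to treat $R_\lambda$ as a mollification of $h$ along the orbit $\{g^t_* h : t \in \bR^{2n}\}$ at scale $1/\lambda$. Set $\chi_\lambda(t) := \lambda^{2n}\chi(\lambda t)$, so $\int \chi_\lambda = 1$ and $\mathrm{supp}\,\chi_\lambda$ shrinks to $\{0\}$ as $\lambda\to\infty$. For a test function $\xi \in \cal{D}(X)$ one has
\[
\langle R_\lambda h, \xi\rangle \;=\; \int_{\bR^{2n}}\chi_\lambda(t)\,\langle h, \xi\circ g^t\rangle\,dt,
\]
and the crucial observation is that $t \mapsto \xi\circ g^t$ is a smooth curve in $\cal{D}(X)$ (smooth in $(t,x)$ and supported in a fixed compact set for $t$ bounded). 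The three assertions then follow by specializing this formula.

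For the first assertion, $t \mapsto \langle h, \xi\circ g^t\rangle$ is continuous and takes the value $\langle h,\xi\rangle$ at $t=0$; since $\chi_\lambda\,dt$ is an approximate identity, the integral above tends to $\langle h,\xi\rangle$, proving $R_\lambda h \to h$ in $\cal{D}'(X)$.

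For smoothness on $U$, I would shrink the support of $\chi$ so that on $U$ the map $g^t$ is precisely the translation $x\mapsto x-t$ for every $t\in\mathrm{supp}\,\chi_\lambda$ (this is exactly why the $\varphi_\nu$ were chosen as coordinate functions). For $\xi\in\cal{D}(U)$ this gives $\xi\circ g^t = \xi(\cdot - t)$, so
\[
\langle R_\lambda h,\xi\rangle \;=\; \Big\langle h,\; y\mapsto \int \chi_\lambda(t)\xi(y-t)\,dt\Big\rangle.
\]
Changing variables $t=y-x$ and justifying a Fubini-type swap via the continuity of $h$ on the smooth $\cal{D}(X)$-valued family $x\mapsto\chi_\lambda(\cdot-x)\xi(x)$ yields $\langle R_\lambda h,\xi\rangle = \int_U f_\lambda(x)\,\xi(x)\,\omega^n$ (up to a Darboux constant), where $f_\lambda(x) := \langle h,\chi_\lambda(\cdot - x)\rangle$ is smooth in $x$ by smoothness of $\chi_\lambda$ and continuity of $h$. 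For the statement near $C$: since $\varphi_\nu$ vanishes on a neighborhood of $C$, so does $\mathrm{sgrad}\,\varphi_\nu$, hence each $g^{t_\nu}_\nu$ fixes a neighborhood of $C$ pointwise, and so does $g^t$; therefore $g^t_* h = h$ near $C$ for every $t$, and $R_\lambda h = h$ there after averaging.

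For the third assertion, represent $h$ on $W$ by a smooth function $u$: $\langle h,\zeta\rangle = \int u\,\zeta\,\omega^n$ for $\zeta\in\cal{D}(W)$. Take $\lambda$ large enough that $\mathrm{supp}\,\chi_\lambda$ is so small that $g^{-t}(\overline V)\subset W$ for every $t\in\mathrm{supp}\,\chi_\lambda$. A change of variables (using that $g^t$ is symplectic, hence $\omega^n$-preserving) gives, for $\xi\in\cal{D}(V)$, $\langle g^t_* h,\xi\rangle = \int_V (u\circ g^{-t})\xi\,\omega^n$, so
\[
\langle R_\lambda h,\xi\rangle \;=\; \int_V \Big(\int \chi_\lambda(t)\,u(g^{-t}(x))\,dt\Big) \xi(x)\,\omega^n,
\]
identifying $R_\lambda h|_V$ with a smooth function.

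The main obstacle I anticipate is organizational rather than conceptual: carefully tracking push-forward versus pull-back conventions, exploiting volume preservation of $g^t$, and choosing the support of $\chi$ small enough that simultaneously (i) $g^t$ really is a translation on $U$, (ii) $g^t$ fixes a neighborhood of $C$, and (iii) $g^{-t}(\overline V)\subset W$, for all relevant $t$. Conceptually this is a standard convolution-with-a-mollifier argument carried out via the flows $g^t$ in place of Euclidean translations.
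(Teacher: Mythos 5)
Your argument is correct and is exactly the ``standard properties of convolutions'' that the paper invokes without proof: identifying $R_\lambda h$ on $U$ (resp.\ on $V$) with a Euclidean convolution (resp.\ an average of smooth pullbacks), and using that $g^t=\mathrm{id}$ near $C$. The only points to polish are notational (the translation identity and hence smoothness should be run on a neighborhood of $\oU$, which the paper's hypotheses provide, and $(g^t)^{-1}$ is $g_{2n}^{-t_{2n}}\circ\dots\circ g_1^{-t_1}$ rather than $g^{-t}$), neither of which affects the proof.
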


\begin{lem}  
For any $h\in \cal{D}'(X)$, smooth distributions are dense in $\text{conv}(h)$.
\end{lem}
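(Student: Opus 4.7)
The plan is to show that any $k\in\text{conv}(h)$ is weak-$*$ approximable by smooth distributions in $\text{conv}(h)$. Fix a basic weak-$*$ neighborhood of $k$, determined by finitely many test functions $\xi_1,\ldots,\xi_J\in\cal{D}(X)$ and $\varepsilon>0$. I will construct a smooth $k_\infty\in\text{conv}(h)$ inside this neighborhood by iterating the regularizations of Lemma~2.2 along a locally finite open cover of $X$.

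Choose a countable locally finite cover $\{U_n\}_{n\ge1}$ of $X$ by precompact open sets each carrying the Darboux coordinates needed to set up a regularization as in Lemma~2.2, together with $V_n\subset\subset U_n$ whose union still covers $X$. Set $k_0=k$ and inductively define $k_n=R^{(n)}_{\lambda_n}k_{n-1}$, where $R^{(n)}_{\lambda}$ is a regularization based on $U_n$ and some $\chi$ of sufficiently small support, whose identity set $C_n\subset X\setminus\overline{U_n}$ is chosen to contain the part of the previous smoothness region $\bigcup_{i<n}\overline{V_i}$ lying outside $\overline{U_n}$. Each $k_n$ lies in $\text{conv}(h)$: the $\Ham_0(\omega)$-orbit of $h$, and hence its closed convex hull, is $\Ham_0(\omega)$-invariant, and $R^{(n)}_{\lambda_n}$ is an integral average of Hamiltonian pushforwards, which keeps $\text{conv}(h)$ invariant.

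The parameter $\lambda_n$ will be chosen, at each step, large enough to enforce three properties simultaneously. First, $|\langle k_n-k_{n-1},\xi_j\rangle|<\varepsilon/2^n$ for every $j$, which is possible because $R_\lambda\to\id$ weakly on $\cal{D}'(X)$ by Lemma~2.2. Second, smoothness of $k_{n-1}$ on a slight shrinking of $V_1\cup\ldots\cup V_{n-1}$ is preserved: inside $U_n$ by the fourth assertion of Lemma~2.2, and outside $U_n$ by the identity behaviour of $R^{(n)}_{\lambda_n}$ near $C_n$. Third, on each compact subset $W$ of this (shrunk) previous smoothness region, the smooth $2n$-form representing $k_n-k_{n-1}$ on $W$ has $C^n(W)$-norm at most $2^{-n}$. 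This last property is feasible because $R^{(n)}_{\lambda}$ applied to a distribution already smooth in the Darboux coordinates on $U_n$ acts there as convolution with the mollifier $\lambda^{2n}\chi(\lambda\,\cdot)$, and standard mollifier theory gives convergence to the identity in $C^\infty$ on compact subsets as $\lambda\to\infty$.

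Passing to the limit, the first property yields that $k_\infty=\lim_n k_n$ exists in $\cal{D}'(X)$ with $|\langle k_\infty-k,\xi_j\rangle|<\varepsilon$; the third yields that on any compact $W\subset X$---which by local finiteness of the cover lies in $V_1\cup\ldots\cup V_m$ for some $m$---the restrictions $k_n|_W$ form a $C^\infty(W)$-Cauchy sequence for $n\ge m$, so $k_\infty$ is globally represented by a smooth $2n$-form; finally $k_\infty\in\text{conv}(h)$ since this set is closed. The principal obstacle is the gap between local and global smoothness: a single $R_\lambda$ smooths only over one chart $U_n$, and the compensating infinite iteration must be controlled simultaneously in the weak-$*$ and $C^\infty_{\text{loc}}$ topologies. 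The technical crux is the third condition above, which relies on the mollifier interpretation of $R_\lambda$ on smooth inputs and on sufficiently rapid growth of the $\lambda_n$.
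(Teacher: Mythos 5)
Your proof is correct and follows the paper's strategy: iterate the chart-wise regularization $R_\lambda$ of Lemma 2.2 over a locally finite Darboux cover, keeping the weak-$*$ error summable and preserving the smoothness already gained. The one genuine divergence is how the infinite iteration is tamed. You impose quantitative $C^n_{\mathrm{loc}}$ Cauchy bounds $\|k_n-k_{n-1}\|_{C^n(W)}\le 2^{-n}$ and sum; the paper instead exhausts $X$ by compacts $C_j\subset X\setminus\bigcup_{k>j}U_k$ on which $R^{(j)}_\lambda$ acts as the identity, so that $h_j=h_{j+1}=\cdots$ on $\mathrm{int}\,C_j$ and the sequence is eventually \emph{constant} on every compact set --- no estimates at all are needed, and the limit is trivially smooth and trivially a distribution. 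Your route works but buys this at the cost of a fact not literally contained in Lemma 2.2 as stated: that $R_\lambda\to\mathrm{id}$ in $C^\infty_{\mathrm{loc}}$ on regions where the input is already smooth (true, since $g^t_*\to\mathrm{id}$ there as $t\to0$, but on the collar between $U_n$ and $C_n$ the operator is not literally a mollifier convolution, so you should argue via smooth dependence of $g^t$ on $t$ rather than ``standard mollifier theory''). Two bookkeeping points you should make explicit: (i) the successive ``slight shrinkings'' of the smoothness region must never descend below the fixed cores $V_i$, or else nothing survives the limit --- the paper arranges this with the nested sets $U_j=V_j^1\supset\supset V_j^2\supset\supset\cdots\supset V(z_j)$; (ii) the preservation of smoothness near $\partial U_n$ (which lies neither in $U_n$ nor in any admissible $C_n\subset X\setminus\overline{U_n}$) comes from the last assertion of Lemma 2.2 applied with $W$ the full previous smoothness region, not from behaviour ``inside $U_n$''. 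Finally, you approximate an arbitrary $k\in\mathrm{conv}(h)$ directly, whereas the paper only approximates $h$ itself; the paper's reduction is legitimate because the closure of the smooth part of $\mathrm{conv}(h)$ is convex and $\Ham_0(\omega)$ invariant, but your version sidesteps that small extra argument.
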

\begin{proof}
It will suffice to prove that given a finite $\Xi\subset \cal{D}(X)$ and $\var>0$, there is a smooth $h'\in\text{conv}(h)$ such that 
$|\langle h'-h,\xi\rangle|\le \var$ for all $\xi\in \Xi$. To show this latter, for each $z\in X$ construct an open neighborhood 
$V(z)\subset\subset X$ so that in a neighborhood of $\overline{V(z)}$ we can write 
$\omega=\sum dx_\nu\wedge dx_{n+\nu}$ in suitable 
local coordinates. Select a locally finite cover $V(z_1), V(z_2),\dots$ of $X$. Thus the $V(z_j)$ form a finite 
or infinite cover 
depending on whether $X$ is compact or not. For each $j$ we can find $U_j\supset\supset V(z_j)$ such that 
$\{U_j\}_j$ is still locally finite, and $\omega=\sum dx_\nu\wedge dx_{n+\nu}$ is still valid in some neighborhood of 
$\overline{U_j}$. 
Fix furthermore open sets $V^i_j$, $i\in\bN$, such that
\[
U_j=V^1_j\supset\supset V^2_j\supset\supset\dots\supset V(z_j),
\]
and compact sets $C_j\subset X\setminus \bigcup_{k>j}U_k$, $C_0=\emptyset$, such that $C_{j-1}\subset\text{int} \,C_j$ and 
$\bigcup_jC_j=X$. We let $h_0=h$ and construct $h_j\in \text{conv}(h)$ so that for $j\ge 1$
\begin{itemize}
\item[] \ \  $|\langle h_j-h,\xi\rangle | <\var\quad \text{if }\xi\in\Xi$;
\item[] \ \ $ h_j |V_1^j\cup\dots\cup V^j_j  \quad \text{is smooth }$;
 \item[]\ \  $h_j=h_{j-1} \quad \text{on int\,} C_{j-1}.$
\end{itemize}

Assuming we already have $h_{j-1}$, we apply Lemma 2.2 with $U=U_j$, $C=C_j$, $V=V_1^j\cup\dots\cup V^j_{j-1}$, and $W=V^{j-1}_1 \cup\dots \cup V^{j-1}_{j-1}$. If $\lambda$ is sufficiently large, then $h_j=R_\lambda h_{j-1}$ will do as the next function. Note that $h_j$ is smooth over $V\cup U_j\supset V^j_1\cup\dots\cup V^j_{j-1}\cup V^j_j$. 

Thus $h_j=h_{j+1}=\dots$ on int$\,C_j$ and $h_j |V(z_1)\cup\dots\cup V(z_j)$ is smooth. If $X$ is compact, we take $h'$ to be the last $h_j$; otherwise we take $h'=\lim\limits_{j\to\infty} h_j$.
\end{proof}

\begin{proof}[Proof of Lemma 2.1]
By an affine function we mean a function $\cD(X)\to\bR$ of the form $\text{const} +\text{linear}$. Clearly, if an affine 
function is bounded above on a symmetric neighborhood of $0\in \cD(X)$, it is bounded below as well, hence continuous.

Let $\cal{B}$ denote a collection of affine functions $\beta: \cD(X)\to\bR$ such that $\beta\le p$. Thus $\beta\in \cB$ can be written 
\begin{equation}  
\beta(\xi)=a+\langle h,\xi\rangle, \quad\text{ with } a\in\bR, \quad h\in \cal{D}'(X).
\end{equation}

The Banach--Hahn separation theorem gives that $p=\sup_{\beta\in \cB}\beta$ with a suitable choice of $\cB$. If $p$ is positively homogeneous, another version of the Banach--Hahn theorem, see e.g. \cite[p.317-319]{Sc}, gives that $\cB$ can be taken to consists of linear forms, i.e. all $a$ will be 0.

By the invariance of $p$, if $\beta$ in (2.2) is in $\cB$, then for any $g\in\Ham(\omega)$
\[
(g_*\beta)(\xi)=a+\langle g_* h, \xi\rangle=a +\langle h, g^*\xi\rangle\le p(\xi).
\]
This means that all $g_*\beta$ can be adjoined to $\cB$, and in fact we can arrange that all $a+\langle h',\xi\rangle$ are
in $\cB$ for any $h\in\cB$ and $h'\in\text{conv}(h)$. Therefore if we take all 
$\beta\in \cB$ of form (2.2) with smooth $h$ and write $h$ as $f\omega^n$, the family $\cal{A}$ of pairs $(a,f)$ 
thus obtained will do according to Lemma 2.3.
\end{proof}

\section{Proof of the second part of Theorem 1.2}  

The second part was:

\begin{thm}  
Let $(X,\omega)$ be a connected, compact, symplectic manifold. Any  continuous, convex, and $\Ham(\omega)$ invariant function
 $p: C^\infty(X)\to\bR$ is strict rearrangement invariant: $p(\xi)=p(\eta)$ if $\xi,\eta$ are equidistributed.
\end{thm}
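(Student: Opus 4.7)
The plan is to combine the representation theorem of Lemma 2.1 with a Katok-type $L^1$-approximation lemma. Since $X$ is compact, $\cD(X) = C^\infty(X)$, so Lemma 2.1 applies directly to the given $p$ and produces a family $\cA \subset \bR \times C^\infty(X)$ such that
\[
p(\xi) = \sup\Bigl\{a + \int_X f\xi\,\omega^n : (a,f) \in \cA\Bigr\},\qquad \xi \in C^\infty(X).
\]
The crucial feature of this representation is that each individual affine function $\beta_{a,f}(\xi) = a + \int_X f\xi\,\omega^n$, with $f$ smooth on the compact manifold $X$, is bounded by an affine function of the $L^1$ norm, and so extends to a continuous affine functional on $L^1(X,\mu)$, where $\mu = \omega^n$. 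This is precisely the mechanism by which, following the strategy announced in the introduction, one replaces $p$ by objects that are continuous in the much coarser $L^1$ topology.

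The second ingredient I would use is Katok's lemma, in the form already exploited in \cite{Ha, OW}: if $\xi,\eta \in C^\infty(X)$ are equidistributed with respect to $\mu$, then the $\Ham(\omega)$ orbit of $\xi$ contains a sequence converging to $\eta$ in $L^1(X,\mu)$. Granted this, the argument is short. Fix equidistributed $\xi,\eta$ and $(a,f) \in \cA$. By $\Ham(\omega)$ invariance of $p$, for every $g \in \Ham(\omega)$,
\[
\beta_{a,f}(\xi \circ g^{-1}) \le p(\xi \circ g^{-1}) = p(\xi).
\]
Choosing $g_n \in \Ham(\omega)$ with $\xi \circ g_n^{-1} \to \eta$ in $L^1$ and using the $L^1$-continuity of $\beta_{a,f}$, we pass to the limit to obtain $\beta_{a,f}(\eta) \le p(\xi)$. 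Taking the supremum over $(a,f) \in \cA$ yields $p(\eta) \le p(\xi)$. Since equidistribution is symmetric, equality follows.

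I expect the main obstacle to be the Katok-type input rather than anything else: the classical Katok construction most directly produces volume-preserving diffeomorphisms realizing the rearrangement in $L^1$, and upgrading these to Hamiltonian diffeomorphisms while retaining $L^1$ control is the technical heart of the Han and Ostrover--Wagner arguments. Once that ingredient is in place, convexity of $p$ reduces the problem to a linear one via Lemma 2.1, and everything else is a routine limit argument; the test functions $f$ need not be $\Ham(\omega)$ related to one another, which is what makes this approach cleaner than attempting to verify rearrangement invariance of $p$ directly.
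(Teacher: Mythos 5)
Your proposal is correct and follows essentially the same route as the paper: Lemma 2.1 to write $p$ as a supremum of $L^1$-continuous affine functionals, the Katok-type $L^1$-approximation of one equidistributed function by the Hamiltonian orbit of the other (the paper's Lemma 3.2), and a one-line limit plus symmetry. The only (immaterial) difference is that you move $\xi$ toward $\eta$ while the paper moves $\eta$ toward $\xi$.
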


As before, $\mu$ denotes the Borel measure on $X$ that the form $\omega^n$ determines. In our integrals below we will often omit $d\mu$ and write $\int_Ef$ for $\int_Ef\, d\mu$; and when $E=X$, we will even omit $X$ and write $\int f$ for $\int_Xf\,d\mu$. In the same spirit, we write $L^q(X)$ for $L^q(X,\mu)$.

We need the following result, an equivalent of Katok's Basic Lemma, valid for noncompact (but connected) $X$ as well:

\begin{lem}  
If $\xi,\eta\in L^1(X)$ are equidistributed, then there is a sequence of $g_k\in\Ham_0(\omega)$ such that
\[\lim\limits_{k\to\infty} \int_X |\xi-\eta\circ g_k| d\mu=0.\]
\end{lem}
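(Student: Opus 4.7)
The approach follows the scheme of Katok's basic lemma, adapted to land in $\Ham_0(\omega)$ and to handle non-compact $X$. The route has three steps: (i) reduce to simple functions of essentially compact support; (ii) reduce further to approximate transport of finite families of small symplectic balls; (iii) realize the transport by a compactly supported Hamiltonian isotopy using the connectedness of $X$. At the end one diagonalizes over $\varepsilon\to 0$ to extract the sequence $g_k$.

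For Step (i), given $\varepsilon>0$, choose $M$ so large that
\[
\int_{\{|\xi|>M\}}|\xi|\,d\mu+\int_{\{|\eta|>M\}}|\eta|\,d\mu<\varepsilon,
\]
partition $[-M,M]$ into intervals $I_1,\dots,I_N$ of length less than $\varepsilon$, select representatives $c_j\in I_j$, and set $A_j=\xi^{-1}(I_j)$, $B_j=\eta^{-1}(I_j)$, $\tilde\xi=\sum_j c_j\chi_{A_j}$, $\tilde\eta=\sum_j c_j\chi_{B_j}$. Equidistribution gives $\mu(A_j)=\mu(B_j)<\infty$, while $\|\xi-\tilde\xi\|_1+\|\eta-\tilde\eta\|_1=O(\varepsilon)$. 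Since
\[
|\tilde\xi-\tilde\eta\circ g|\le 2M\sum_j \chi_{A_j\triangle g^{-1}(B_j)}\quad\text{for every }g\in\Ham_0(\omega),
\]
it suffices to produce $g$ with $\sum_j\mu(A_j\triangle g^{-1}(B_j))<\varepsilon$. For Step (ii), by regularity of $\mu$ and a Darboux atlas over a compact set containing $\bigcup_j(A_j\cup B_j)$, I would approximate each $A_j,B_j$ in symmetric difference by disjoint finite unions of small open symplectic balls, adjusted so that matched balls have exactly equal symplectic volume. The task reduces to: given two disjoint finite families of pairwise disjoint small symplectic balls $\{A_j^k\},\{B_j^k\}$ in $X$ with $\mu(A_j^k)=\mu(B_j^k)$, find $g\in\Ham_0(\omega)$ carrying $B_j^k$ onto $A_j^k$.

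Step (iii) is a standard symplectic transitivity statement: on a connected symplectic manifold, each ball can be moved to its designated destination along a path by the Hamiltonian flow of a cutoff of a linear Darboux coordinate, which is manifestly a compactly supported isotopy. The \emph{main obstacle} lies here, in the collision-free serial transport of many balls when they must criss-cross or swap positions, while keeping the final $g$ compactly supported. The remedy is an ample-room principle: because the total volume of the balls is arbitrarily small and $X$ is connected, one reserves a ``parking'' region disjoint from both source and target configurations; each $B_j^k$ is first moved into the parking region along a path avoiding the current positions of all other balls, and then delivered to its destination along another such path. Composing the individual compactly supported Hamiltonian isotopies produces the desired $g$, and a diagonal sequence in $\varepsilon$ then yields $g_k$ with $\int_X|\xi-\eta\circ g_k|\,d\mu\to 0$.
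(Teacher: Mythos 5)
Your Step (i) is essentially the reduction the paper itself performs: chop the range into short intervals, replace $\xi,\eta$ by simple functions constant on the level sets $A_j=\xi^{-1}(I_j)$, $B_j=\eta^{-1}(I_j)$, and reduce the problem to approximately transporting the $A_j$ onto the $B_j$. But at that point the paper passes (by inner regularity) to compact subsets $K_i\subset A_i$, $L_i\subset B_i$ of equal measure and simply invokes Katok's Basic Lemma \cite[Section 3]{K}, which asserts exactly that some $g\in\Ham_0(\omega)$ makes each $\mu(K_i\setminus g^{-1}L_i)$ as small as desired. Your Steps (ii)--(iii) are an attempt to reprove that lemma from scratch, and the gaps lie there. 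The matching condition ``equal symplectic volume'' is not the right one: in dimension $2n\ge4$ two symplectic balls of equal volume need not be carried onto one another by any symplectomorphism (by non-squeezing, a thin ellipsoid of the same volume does not even embed symplectically into a round ball). You must take all balls to be images of one fixed standard ball $B(r)$ under Darboux embeddings; then equal volume is automatic, but you need instead (a) that the inner approximations of $A_j$ and of $B_j$ consist of the \emph{same number} of such balls, which a Vitali-type filling does not give directly and requires a counting-and-discarding step, and (b) the nontrivial fact that any two such embedded small balls in a connected symplectic manifold are related by a compactly supported Hamiltonian diffeomorphism (connectedness of the space of symplectic embeddings of a small ball, via an Alexander-trick argument and connectedness of $Sp(2n)$, plus extension of the local isotopy to a global compactly supported one). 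The collision problem you flag is real but is the easier part; the shape-matching and counting issues you do not flag are the heart of Katok's lemma.

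A second, independent gap: the lemma is stated and used for possibly noncompact connected $X$, where $\mu(X)$ may be infinite, and your Step (i) breaks there. The level set $A_j=\xi^{-1}(I_j)$ for the interval containing $0$ then has infinite measure, so the claim $\mu(A_j)=\mu(B_j)<\infty$ is false, $\tilde\xi$ need not be integrable if $c_j\ne0$, and $\bigcup_j(A_j\cup B_j)$ is not contained in any compact set, so your Darboux atlas over such a set does not exist. The paper handles this by first replacing $\xi,\eta$ by their truncations to the sets $\{|\xi|\ge a\}$, $\{|\eta|\ge a\}$ (which have finite measure and absorb all but $\varepsilon$ of the $L^1$ mass), enclosing these in a connected open $X'\subset X$ of finite measure, running the finite-measure argument in $X'$, and extending the resulting $g\in\Ham_0(\omega|X')$ by the identity. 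You should either add this truncation step, or restrict your argument to finite measure and cite Katok's Basic Lemma for the transport of compact sets, as the paper does.
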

\begin{proof}
(Essentially as in \cite{OW}, \cite[Proposition 1.12]{Ha}.) Given $\var>0$, we will find $g\in\Ham_0(\omega)$ such that
$\int|\xi-\eta\circ g|<5\var$. Assume first $\mu(X)<\infty$. 

The measure $|\xi|d\mu$ is absolutely  continuous with respect to $d\mu$, hence there is a $\delta>0$ such that 
$\int_E |\xi| <\var$ if  $\mu(E)<\delta$.
Construct disjoint intervals $J_1,\dots,J_N\subset \bR$ of length $<\var/\mu (X)$ so that 
$\mu(X\setminus\bigcup_i\xi^{-1} J_i) < \delta/2$, and choose compact sets $K_i\subset \xi^{-1}J_i$ 
so that also
\begin{equation} 
\mu(X\setminus \bigcup_i K_i) < \delta/2.
\end{equation}
By equidistribution $\mu(\eta^{-1}J_i)=\mu(\xi^{-1} J_i)$, hence there are compact $L_i\subset \eta^{-1}J_i$
such that $\mu(L_i)=\mu(K_i)$. The $K_i$ are disjoint among themselves and so are the $L_i$. In this situation Katok's Basic Lemma \cite[Section 3]{K} provides a $g\in \Ham_0(\omega)$ such that
\begin{equation}   
\mu(K_i\setminus g^{-1} L_i) < \delta/2N,\quad i=1,\dots,N.
\end{equation}

If $x\in K_i\cap g^{-1}L_i$ then $\xi(x)$, $\eta(gx)\in J_i$ and so $|\xi(x)-\eta(gx)| < \var/\mu (X)$. Conversely, 
$|\xi(x)-\eta(gx)| \ge \var/\mu(X)$ can happen only if
\[x\in E, \quad\text{ where } E=\big(X\setminus \bigcup_i K_i\big)\cup \bigcup_i\big(K_i\setminus g^{-1} L_i\big).\]
By (3.1), (3.2) $\mu(E)<\delta$, whence $\mu(gE)<\delta$ and 
\[
\int |\xi-\eta\circ g|= \int_{X\setminus E} |\xi-\eta\circ g| + \int_E |\xi-\eta\circ g| < \var + \int_E |\xi| + \int_{gE} |\eta| < 3\var.
\]

This takes care of $X$ of finite measure. In general, choose an $a>0$ so that the level sets $Y_1=\{|\xi|\ge a\}$ 
and $Y_2=\{|\eta|\ge a\}$ satisfy 
$\int_{X\setminus Y_1}|\xi|=\int_{X\setminus Y_2}|\eta|<\var$. Then $\mu(Y_1)=\mu(Y_2)<\infty$. The functions
$$
\xi'=\begin{cases} \xi & \text{ on } Y_1\\ 0 &\text{ on } X\setminus Y_1\end{cases}\qquad\text{and}\qquad
 \eta'=\begin{cases}\eta &\text{ on } Y_2\\0 & \text{ on } X\setminus Y_2\end{cases}
$$
are also equidistributed. Construct a connected open $X'\subset X$ of finite measure containing $Y_1\cup Y_2$. 
By what we have  proved so far, there is a
$g\in\Ham_0(\omega|X')$ such that $\int_{X'}|\xi'-\eta'\circ g|<3\var$. Extend $g$ to all of $X$ by identity on 
$X\setminus X'$. 
Denoting this extension also by $g$, we have
$$
\int|\xi-\eta\circ g|\le\int |\xi'-\eta'\circ g|+\int|\xi-\xi'|+\int|\eta-\eta'|< 3\var+\var+\var=5\var.
$$

To finish the proof, we let $\var=1/k$ and $g=g_k$, $k\in\bN$, and obtain the sequence sought.
\end{proof}

\begin{proof}[Proof of Theorem 3.1] 
Consider $\cal{A}\subset\bR\times C^\infty(X)$ of Lemma 2.1:
\[
p(\xi)=\sup \Big\{a+\int f\xi : (a,f)\in \cal{A}\Big\}.
\]
Suppose $\xi, \eta\in C^\infty(X)$ are equidistributed, and let $g_k$ be as in Lemma 3.2. With any $(a,f)\in\cal{A}$
\[ 
p(\eta)=p(\eta\circ g_k)\ge a+\int(\eta\circ g_k)f\to a+\int f\xi\quad \text{ as } k\to \infty.
\]
Taking sup over $(a,f)\in \cal{A}$,  $p(\eta)\ge p(\xi)$ follows, and in fact $p(\xi)=p(\eta)$ by symmetry.
\end{proof}

\section{Proof of the first part of Theorem 1.2}  

This is what the first part says:

\begin{thm}  
If $(X,\omega)$ is a connected compact symplectic manifold, any continuous, convex, $\Ham(\omega)$ invariant function $p:C^\infty(X)\to\bR$ is continuous in the sup norm topology on $C^\infty(X)$. 
\end{thm}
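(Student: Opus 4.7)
The plan is to combine the dual representation of Lemma~2.1, the rearrangement invariance already proved in Theorem~3.1, and Katok's lemma (Lemma~3.2), thereby reducing sup-norm continuity to a quantitative bound deduced from the given Fr\'echet continuity of $p$. Because $p$ is convex and finite on $C^\infty(X)$, continuity at $0$ in $\|\ \ \|_\infty$ is equivalent to boundedness above on some $\|\ \ \|_\infty$-neighborhood of $0$, so it suffices to produce a $\delta>0$ with $\sup\{p(\xi):\xi\in C^\infty(X),\,\|\xi\|_\infty\le\delta\}<\infty$.

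First I would rewrite $p$ in terms of decreasing rearrangements. By Lemma~2.1, $p(\xi)=\sup_{(a,f)\in\cA}(a+\int f\xi\,\omega^n)$, and by $\Ham(\omega)$-invariance of $p$ the family $\cA$ may be assumed closed under $(a,f)\mapsto(a,f\circ g^{-1})$, $g\in\Ham(\omega)$. Substituting and taking the supremum over $g$ in $\int f(\xi\circ g)\,\omega^n$, Lemma~3.2 together with Hardy--Littlewood gives
\[
 p(\xi)=\sup_{(a,f)\in\cA}\Big(a+\int_0^{\mu(X)}f^*(s)\,\xi^*(s)\,ds\Big),
\]
where $f^*,\xi^*$ denote the decreasing rearrangements on $[0,\mu(X)]$. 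Since $\sup_{\xi\in C^\infty,\ \|\xi\|_\infty\le\delta}\int_0^{\mu(X)}f^*\xi^*=\delta\|f\|_{L^1}$ (achieved in the limit by smooth approximations to $\delta\,\mathrm{sgn}(f^*)$), and suprema may be interchanged, the desired sup-norm boundedness reduces to
\[
 \sup_{(a,f)\in\cA}\bigl(a+\delta\|f\|_{L^1}\bigr)<\infty\quad\text{for some }\delta>0.
\]

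On the other hand, Fr\'echet continuity of $p$ gives $p\le M_0$ on a ball $\{\|\xi\|_{C^N}\le\delta_0\}$, which in the same language reads $a+\cS(f)\le M_0$ for every $(a,f)\in\cA$, with $\cS(f):=\sup_{\|\xi\|_{C^N}\le\delta_0}\int_0^{\mu(X)}f^*\xi^*$. Establishing a universal lower bound $\cS(f)\ge c\|f\|_{L^1}$ with $c>0$ independent of $f$ would eliminate $a$ and yield $a+\delta\|f\|_{L^1}\le M_0$ whenever $\delta\le c$, completing the proof.

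The main obstacle is precisely this lower bound. A smooth $\xi$ with $\|\xi\|_{C^N}\le\delta_0$ cannot have a rearrangement with arbitrarily sharp transitions---the $C^N$ constraint forces a minimum transition width---so smooth approximation of the optimal step profile $\delta_0\,\mathrm{sgn}(f^*)$ leaves a transition error of order $w\cdot\max_{\mathrm{transition}}|f^*|$ that is not a priori controlled by $\|f\|_{L^1}$ alone. I expect the ``real analysis type arguments'' mentioned in the introduction to enter here: either through a Calder\'on--Zygmund-style decomposition of $f$ by the size of $f^*$ with separate estimates on each dyadic level, or through a family of localised smooth test functions combined using convexity of $p$ and $\Ham(\omega)$-invariance. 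An alternative route is to bypass $\cS(f)$ altogether, using a second application of Katok's lemma to exhibit, for every smooth $\xi$ with $\|\xi\|_\infty\le\delta$, an $\eta\in C^\infty(X)$ with $\|\eta\|_{C^N}\le\delta_0$ to whose $\Ham(\omega)$-orbit $\xi$ is Fr\'echet-close, and then to invoke Theorem~3.1 directly.
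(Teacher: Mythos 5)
Your reduction is sound as far as it goes, and it parallels the paper's strategy: both arguments hinge on the dual representation of Lemma 2.1 and on showing that finiteness of $p$ forces a uniform $L^1$ bound on the dual family. But the step you yourself flag as ``the main obstacle''---the lower bound $\cS(f)\ge c\,\|f\|_{L^1}$ with $c$ independent of $f$---is exactly the content of the paper's Lemma 4.3, and you have not proved it; the routes you sketch (Calder\'on--Zygmund decomposition, smooth approximation of $\delta_0\,\mathrm{sgn}(f^*)$) are speculative, and the second one, as you note, founders on the transition-width problem. So as written the proposal has a genuine gap at its central point.

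The idea that closes the gap is to rearrange and average $f$ rather than to approximate the optimal test function. Lemma 4.5 of the paper shows that the conditional average of $f$ over any set $E$ (i.e.\ replacing $f$ by $\fint_E f$ on $E$) lies in the $L^1$-closed convex hull $\mathrm{conv}_1(f)$ of the $\Ham(\omega)$-orbit; this is proved by permuting pieces of a partition of $E$ via Katok's lemma and averaging over permutations. Applying this with $S=\{f\ge0\}$ and $X\setminus S$, and then moving $S$ onto a set $T$ containing the support of a \emph{single fixed} nonnegative bump $\xi\in C^\infty(X)$ with $\mu(\mathrm{supp}\,\xi)\le\tfrac12\mu(X)$, one gets (Lemma 4.4)
\[
\sup_{g\in\Ham(\omega)}\int (f\circ g)\,\xi\;\ge\;\fint_S f\int\xi^+ \;-\;\fint_{X\setminus S}f\int\xi^-,
\]
whence $\int f^+\le 2M(\xi)/\int\xi$ when $\mu(S)\ge\tfrac12\mu(X)$ (and symmetrically for $f^-$ using $-\xi$), with $|\int f|$ controlled by testing against the constants $\pm1$. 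No quantitative $C^N$ bound is needed: mere pointwise finiteness of $M(\xi)=\sup_{f\in\cF}\int f\xi$ at these finitely many test functions already yields $\sup_{\cF}\int|f|<\infty$, because the two-valued function $\fint_S f\cdot\chi_T+\fint_{X\setminus S}f\cdot\chi_{X\setminus T}$ pairs cleanly with the fixed smooth $\xi$, sidestepping the transition-width issue entirely. One further remark: the paper first treats positively homogeneous $p$ (where Lemma 2.1 gives $a=0$) and then handles general $p$ via the Minkowski functional of $\{p<c\}$, which avoids having to track the constants $a$; your direct handling of the $a$'s would also work once the lower bound is in place, but you should still say a word about propagating boundedness from a neighborhood of $0$ to neighborhoods of arbitrary points (a routine convexity argument, cf.\ Lemma 4.2).
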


We will use the following standard fact:

\begin{lem}  
Let $V$ be a locally convex topological vector space over $\bR$. If $p:V\to\bR$ is convex and bounded above 
on some open $U\subset V$, then it is continuous on $U$.
\end{lem}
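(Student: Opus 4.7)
The plan is to reduce the statement to continuity at a single point, and then establish continuity at that point via two one-sided convexity estimates.

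First I would note that it suffices to prove continuity of $p$ at an arbitrary $v_0 \in U$. By replacing $p$ with the function $v \mapsto p(v_0 + v)$ and $U$ with $U - v_0$, I may assume $v_0 = 0 \in U$. Since scalar multiplication by $-1$ is a homeomorphism of $V$, the set $W = U \cap (-U)$ is an open symmetric neighborhood of $0$ on which $p \le M$ for some $M \in \bR$.

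Next comes the key lower bound on $W$. For any $v \in W$, both $v$ and $-v$ lie in $W$, and since $0 = \tfrac12 v + \tfrac12(-v)$, convexity gives
\[
p(0) \le \tfrac12 p(v) + \tfrac12 p(-v) \le \tfrac12 p(v) + \tfrac{M}{2},
\]
so $p(v) \ge 2p(0) - M$ on $W$. Thus $p$ is bounded on the symmetric neighborhood $W$, with $|p(v) - p(0)| \le C := \max(M - p(0), M - p(0))$.

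The continuity at $0$ then follows from the standard two-sided pinching. For $v \in W$ and $0 < t \le 1$, convexity applied to $tv = (1-t)\cdot 0 + t \cdot v$ yields
\[
p(tv) - p(0) \le t(p(v) - p(0)) \le tC,
\]
while applying convexity to the identity $0 = \tfrac{1}{1+t}(tv) + \tfrac{t}{1+t}(-v)$ gives
\[
p(0) \le \tfrac{1}{1+t} p(tv) + \tfrac{t}{1+t} p(-v),
\]
which rearranges to $p(tv) - p(0) \ge -t(p(-v) - p(0)) \ge -tC$. Given $\var > 0$, pick $t_0 \in (0,1]$ with $t_0 C < \var$; then the open neighborhood $t_0 W$ of $0$ (open because scalar multiplication by $t_0 \ne 0$ is a homeomorphism) satisfies $|p(u) - p(0)| < \var$ for all $u \in t_0 W$, proving continuity at $0$.

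I do not anticipate a serious obstacle: the only things to be careful about are (i) guaranteeing a symmetric neighborhood inside $U$ so that the lower bound argument is available (handled by $W = U \cap (-U)$), and (ii) knowing that $t_0 W$ is open, which is automatic in a topological vector space. Both are standard facts about locally convex spaces and require no additional structure beyond what is assumed.
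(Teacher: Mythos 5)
Your proof is correct and follows essentially the same route as the paper: both arguments obtain a two-sided bound at a point by exploiting a symmetric bounded neighborhood (your $W=U\cap(-U)$ versus the paper's $(\lambda U)\cap(-\lambda U)$) and then pinch $p(tv)-p(0)$ between $\pm tC$ via the same two convex combinations. The only blemish is cosmetic: $\max(M-p(0),\,M-p(0))$ is just $M-p(0)$.
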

\begin{proof}
We can assume $U$ is convex. Say, we want to prove continuity at $0\in U$. Let $s=\sup_U p <\infty$. With 
$0<\lambda<1$ and $v\in (\lambda U)\cap (-\lambda U)$ convexity implies
\begin{equation*}
\begin{rcases}
&p(v)-p(0)\le\lambda(p(v/\lambda)-p(0))\le \lambda(s-p(0))\\
&p(0)-p(v)\le\lambda(p(-v/\lambda)-p(0))\le \lambda(s-p(0))
\end{rcases}
\to 0
\end{equation*}
when $\lambda\to 0$, as needed.
\end{proof}

The key to the proof of Theorem 4.1 is the following.

\begin{lem}  
Let $\cal{F}\subset L^1(X)$ be a $\Ham(\omega)$ invariant family of functions. If for every $\xi\in C^\infty(X)$
\begin{equation}   
\sup_{f\in\cF} \int_X f\xi\,d\mu < \infty,
\end{equation}
then $\sup_{f\in\cF} \int_X |f|\,d\mu < \infty$.
\end{lem}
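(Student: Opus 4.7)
The plan is to combine the Ham-invariance of $\mathcal{F}$ with Katok's lemma (Lemma 3.2) to invoke the rearrangement inequality, then test against a specific smooth function whose decreasing rearrangement is essentially linear.

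The central identity I would establish is that, for every $\xi\in C^\infty(X)$,
\[
\sup_{f\in\mathcal{F}}\int_X f\xi\,d\mu \;=\; \sup_{f\in\mathcal{F}}\int_0^{\mu(X)} f^*(s)\,\xi^*(s)\,ds,
\]
where $f^*,\xi^*$ denote the decreasing rearrangements on $[0,\mu(X)]$. The proof uses Ham-invariance to rewrite the left side as $\sup_{f,g}\int f(\xi\circ g)$, then Katok's lemma plus dominated convergence (the functions $\xi\circ g$ are uniformly bounded in $L^\infty$, while $f\in L^1$) to identify this with $\sup_f\sup_{\eta\sim\xi}\int f\eta$, and finally the Hardy--Littlewood rearrangement inequality, valid for signed integrands, to reach $\int f^*\xi^*$.

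I would then pick $\xi_0\in C^\infty(X)$ with $\|\xi_0\|_\infty=1$ and rearrangement $\xi_0^*(s)=1-2s/\mu(X)$. Concretely: take a smooth Morse function $\phi\colon X\to[-1,1]$ with a unique minimum and maximum, so its distribution function $F(t)=\mu(\{\phi\le t\})$ is continuous and strictly increasing, and set $\xi_0=h\circ\phi$ with $h(t)=2F(t)/\mu(X)-1$ (smoothing near the critical values if needed). For this choice
\[
\int_0^{\mu(X)} f^*\xi_0^*\,ds = \int_X f\,d\mu - \frac{2}{\mu(X)}\int_0^{\mu(X)} s\,f^*(s)\,ds.
\]
The first summand is uniformly bounded on $\mathcal{F}$ by the hypothesis applied to $\xi\equiv 1$. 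For the second, letting $s_0$ be the zero of $f^*$ and applying Chebyshev's integral inequality separately on $[0,s_0]$ and $[s_0,\mu(X)]$ (where $s$ is increasing and $f^*$ is monotone on each piece) gives $\int_0^{\mu(X)} s\,f^*\le\frac{1}{2}\bigl(s_0\int f-\mu(X)\|f^-\|_{L^1}\bigr)$. Since $\|f^-\|_{L^1}\ge\frac{1}{2}(\|f\|_{L^1}-|\int f|)$, this implies $\int f^*\xi_0^*\ge\frac{1}{2}\|f\|_{L^1}-C$, with $C$ depending only on $\mu(X)$ and the uniform bound on $|\int f|$.

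The hypothesis applied to $\xi_0$ then yields $\sup_f\|f\|_{L^1}\le 2\sup_f\int f\xi_0+2C<\infty$. The main subtlety I anticipate is the first-step identity: one must justify that Katok's $L^1$-approximation combined with dominated convergence lets one pass from the Ham-orbit supremum to the supremum over all equidistributed functions, and then invoke the rearrangement inequality in its signed form. Once that identity is in hand, everything else reduces to elementary one-dimensional analysis, and notably no uniform-boundedness principle is required.
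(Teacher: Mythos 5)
Your argument is correct in substance, but it takes a genuinely different route from the paper's. The paper fixes one nonnegative bump $\xi\in C^\infty(X)$ whose support has measure $\le\mu(X)/2$ and, for each $f$ separately, uses Lemma 4.4 (which rests on the averaging Lemma 4.5 and Katok's Lemma 3.2) to slide the support of $\xi$ onto whichever of $\{f\ge0\}$, $\{f\le0\}$ has measure $\ge\mu(X)/2$; this bounds $\int f^+$ or $\int f^-$, and the bound on $|\int f|$ coming from $\xi=\pm1$ closes the case split. You instead prove the identity $\sup_{f}\int f\xi=\sup_f\int_0^{\mu(X)}f^*\xi^*$ and test against a single $\xi_0$ with linear decreasing rearrangement. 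Your identity is sound: Ham--invariance plus Katok plus the uniform-integrability limit argument (which you correctly flag; note it is uniform integrability of $f\,d\mu$ rather than literal dominated convergence, since $f$ is only $L^1$) reduces the orbit supremum to $\sup_{\eta\sim\xi}\int f\eta$, and the signed Hardy--Littlewood attainment result identifies this with $\int f^*\xi^*$ --- essentially the content of the paper's Lemma 7.2, which the paper only develops later for Theorem 1.4. Your one-dimensional Chebyshev computation checks out (with the convention $s_0=0$ or $\mu(X)$ when $f^*$ has constant sign). The one loose end is the smoothness of $\xi_0$: the function $h\circ\phi$ built from the distribution function of a Morse function is continuous but not smooth at critical values, and after smoothing $\xi_0^*$ is no longer exactly linear. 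This is easily repaired --- decreasing rearrangement is a contraction in sup norm, so $\|\xi_0-h\circ\phi\|_\infty<\var$ gives $\int f^*\xi_0^*\ge\tfrac12\|f\|_{L^1}-C-\var\|f\|_{L^1}$, and $\var<1/2$ suffices --- but it should be said. Net comparison: your proof imports heavier rearrangement machinery than is needed at this stage, whereas the paper's is more elementary and reuses Lemma 4.4, which it needs again for Lemma 6.1; on the other hand your single-test-function formulation makes the quantitative dependence of $\sup_f\|f\|_{L^1}$ on $M(\xi_0)$, $M(1)$, $M(-1)$ completely explicit.
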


This is not hard to show and will suffice to prove Theorem 4.1; but later we will need a more precise 
statement, whose proof is just a little more involved. Let $\xi^+=\max(\xi,0)$ and $\xi^-=\max(-\xi,0)$ denote the positive 
and negative parts of functions $\xi\colon X\to\bR$. If $E\subset X$ is measurable, write $\fint_E\xi$ for the
 average $\int_E\xi/\mu(E)$ of an integrable function. If $\mu(E)=0$, we let $\fint_E\xi=0$.

\begin{lem} 
Let $f\in L^1(X)$, $\xi\in L^\infty(X)$, and $S,T\subset X$ be of equal measure. If 
$\xi\ge 0$ on $T$  and $\xi\le 0$ on $X\setminus T$, then
\begin{equation}
\sup\Big\{\int_X (f\circ g)\xi\colon g\in\Ham(\omega)\Big\} 
\ge\fint_S f\int\xi^+-\fint_{X\setminus S} f\int\xi^-.
\end{equation}
\end{lem}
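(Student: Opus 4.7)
The plan is to reduce matters to Lemma 3.2. I will construct a single function $h\in L^1(X)$ that is equidistributed with $f$ and satisfies $\int h\xi\,d\mu\ge\fint_S f\int\xi^+-\fint_{X\setminus S}f\int\xi^-$. Given such $h$, Lemma 3.2 produces $g_k\in\Ham(\omega)$ with $\int|h-f\circ g_k|\,d\mu\to 0$, and since $\xi\in L^\infty$, $\int(f\circ g_k)\xi\to\int h\xi$. Thus the supremum in the target inequality is at least $\int h\xi$, which is at least its right-hand side; this replaces an optimization over $\Ham(\omega)$ by a single well-chosen rearrangement of $f$.

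To locate $h$, first rewrite the right-hand side using that $\xi^+$ is supported in $T$ with $\xi^+=\xi$ there, so $\int\xi^+=\int_T\xi$, and similarly $\int\xi^-=-\int_{X\setminus T}\xi$. The target then reads
\[
\int_T h\xi+\int_{X\setminus T}h\xi\ \ge\ \fint_S f\int_T\xi+\fint_{X\setminus S}f\int_{X\setminus T}\xi.
\]
It therefore suffices to produce $h|_T$ on $T$ equidistributed with $f|_S$ satisfying $\int_T h\xi\ge\fint_S f\cdot\int_T\xi$, and $h|_{X\setminus T}$ on $X\setminus T$ equidistributed with $f|_{X\setminus S}$ satisfying $\int_{X\setminus T}h\xi\ge\fint_{X\setminus S}f\cdot\int_{X\setminus T}\xi$. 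Concatenating then yields $h$ on $X$ equidistributed with $f$, because distributions add over the partitions $X=T\sqcup(X\setminus T)$ and $X=S\sqcup(X\setminus S)$.

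Each piece is produced by the standard monotone (aligned) rearrangement. Writing $E\in\{T,X\setminus T\}$ with corresponding $E'\in\{S,X\setminus S\}$, let $(f|_{E'})^*$ and $(\xi|_E)^*$ denote the non-increasing rearrangements on $[0,\mu(E)]$. There is a measure-preserving $\Phi_E\colon E\to[0,\mu(E)]$ with $\xi|_E=(\xi|_E)^*\circ\Phi_E$; set $h|_E=(f|_{E'})^*\circ\Phi_E$. Then $h|_E$ is equidistributed with $f|_{E'}$, and $h|_E$ and $\xi|_E$ both factor through $\Phi_E$ by non-increasing functions, so the pair is similarly ordered on $E$ in the pointwise Chebyshev sense. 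Chebyshev's integral inequality for similarly ordered functions on a finite measure space now gives $\fint_E h\xi\ge\fint_E h\cdot\fint_E\xi$, equivalently $\int_E h\xi\ge\fint_{E'}f\cdot\int_E\xi$, as required.

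The one technical point that needs care is the existence of $\Phi_E$ representing $\xi|_E$ as a pullback of $(\xi|_E)^*$. This is classical for finite non-atomic measures: one ranks points by $\xi$-value and breaks ties within any level set by an arbitrary measure-preserving identification of the level set with a subinterval of $[0,\mu(E)]$, which is possible because $\mu|_E$ is non-atomic (it comes from the smooth volume form $\omega^n$ on $X$). No new idea should be required here---only careful measure-theoretic bookkeeping.
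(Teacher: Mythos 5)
Your argument is correct, but it takes a genuinely different route from the paper's. The paper first invokes its Lemma 4.5 to replace $f$ by the two--valued function $f'$ equal to $\fint_S f$ on $S$ and $\fint_{X\setminus S}f$ on $X\setminus S$, which lies in the closed convex hull $\mathrm{conv}_1(f)$ of the orbit; it then applies Lemma 3.2 to the characteristic functions $\chi_S,\chi_T$ to transport $f'$ to $f''\in\mathrm{conv}_1(f)$ taking the value $\fint_Sf$ on $T$ and $\fint_{X\setminus S}f$ on $X\setminus T$, so that $\int f''\xi$ equals the right--hand side exactly; finally it uses that the linear functional $h\mapsto\int h\xi$ has the same supremum over the orbit as over $\mathrm{conv}_1(f)$. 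You instead build a single \emph{strict rearrangement} $h$ of $f$ — the monotone rearrangements of $f|_S$ and $f|_{X\setminus S}$ aligned with $\xi$ on $T$ and $X\setminus T$ respectively — and bound $\int h\xi$ below by the right--hand side via Chebyshev's integral inequality for similarly ordered functions; then one application of Lemma 3.2 to the pair $(h,f)$ finishes. Your route bypasses Lemma 4.5 and all convex--hull considerations, at the price of invoking the classical measure--preserving parametrization $\Phi_E$ with $\xi|_E=(\xi|_E)^*\circ\Phi_E$ (legitimate here, since $\mu$ is non--atomic) and Chebyshev's inequality — which the paper does use anyway, as Lemma 7.3. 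A small bonus of your construction is that it yields a generally sharper lower bound (the aligned pairing dominates the averaged one), though only the averaged bound is needed; a small bookkeeping point is the degenerate case $\mu(T)=0$ or $\mu(T)=\mu(X)$, where one of the two pieces is empty and the convention $\fint_\emptyset=0$ makes the claim trivial on that piece.
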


First we show how this implies Lemma 4.3.

\begin{proof}[Proof of Lemma 4.3]
We can assume $\mu(X)=1$. Let $M(\xi)$  denote the  left hand side of (4.1).
Fix a nonnegative $\xi\in C^\infty(X)$ that is not identically $0$, but $T'=\{\xi>0\}$ has measure $\le1/2$. 
Let $f\in\cF$. Suppose first that $S=\{f\ge 0\}$ has measure $\ge1/2$, and choose $T\supset T'$ so
that $\mu(S)=\mu(T)$. By Lemma 4.4
$M(\xi)\ge \fint_S f\int\xi$, hence
\[
\int f^+\le 2M(\xi)\Big/\int\xi.
\]
If, instead of $S$, $\{f\le 0\}$ has measure $\ge1/2$, Lemma 4.4 implies in the same way that 
$\int f^-\le 2M(-\xi)/\int\xi$. Since $|\int f^+-\int f^-|=|\int f|\le M(1)+M(-1)$, in both cases we obtain a bound for 
$\int|f|=\int f^++\int f^-$, as
claimed.

\end{proof}

Given $f\in L^1(X)$, we will write conv$_1(f)$ for the closure, in the $L^1(X)$ topology, of the convex hull of the
orbit of $f$ under  $\Ham(\omega)$. In light of Lemma 3.2 this is the same as the closed convex hull of all strict
rearrangements of $f$.
To prove Lemma 4.4 we need the following.
\begin{lem}
If $f\in L^1(X)$ and $E\subset X$ has positive measure, then the function
\[
f'=\begin{cases}\fint_E f\quad&\text{on } E\\f&\text{on } X\setminus E
\end{cases}
\]
is in conv$_1(f)$.
\end{lem}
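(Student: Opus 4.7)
The plan is to realize $f'$ exactly as a finite convex combination of strict rearrangements of $f$, and then invoke Lemma 3.2 to place each such rearrangement in $\text{conv}_1(f)$. First I would handle the case when $f$ is adapted to an equal-measure partition of $E$: suppose $E=E_1\cup\dots\cup E_N$ with $\mu(E_i)=\mu(E)/N$ and $f\equiv c_i$ on $E_i$, with $f$ arbitrary on $X\setminus E$. For each permutation $\sigma$ of $\{1,\dots,N\}$ I would set
\[h_\sigma(x)=\begin{cases} c_{\sigma(i)},& x\in E_i,\\ f(x),& x\in X\setminus E.\end{cases}\]
Then $h_\sigma$ and $f$ agree on $X\setminus E$ and each attains every value $c_j$ on a set of measure $\mu(E)/N$ inside $E$, so they are globally equidistributed. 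Lemma 3.2 places $h_\sigma$ in the $L^1$-closure of the $\Ham(\omega)$-orbit of $f$, hence in $\text{conv}_1(f)$. Averaging gives, for $x\in E_i$,
\[\frac{1}{N!}\sum_\sigma h_\sigma(x)=\frac{(N-1)!}{N!}\sum_{j=1}^N c_j=\fint_E f,\]
while on $X\setminus E$ the average is $f(x)$. Thus $f'=\frac{1}{N!}\sum_\sigma h_\sigma$ is a finite convex combination of points in the convex set $\text{conv}_1(f)$, so $f'\in\text{conv}_1(f)$.

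For a general $f\in L^1(X)$ I would approximate $f$ in $L^1$ by functions $f_n$ of the special form above; this is possible because $\mu$ is nonatomic, allowing partitions of $E$ into arbitrarily fine equal-measure pieces, on which the conditional expectations of $f$ converge to $f$ in $L^1(E)$. The first step supplies finite convex combinations $\Phi_n=\sum_k \lambda_k^{(n)}(f_n\circ g_k^{(n)})$ with $g_k^{(n)}\in\Ham(\omega)$ and $\|\Phi_n-(f_n)'\|_1<1/n$. Substituting $f$ for $f_n$ in each $\Phi_n$ yields $\tilde\Phi_n=\sum_k\lambda_k^{(n)}(f\circ g_k^{(n)})$, still in the convex hull of the $\Ham(\omega)$-orbit of $f$; since the $g_k^{(n)}$ preserve $\mu$, $\|\tilde\Phi_n-\Phi_n\|_1\le\|f-f_n\|_1$. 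A direct calculation also yields $\|(f_n)'-f'\|_1\le\|f-f_n\|_1$, so $\tilde\Phi_n\to f'$ in $L^1$, which proves $f'\in\text{conv}_1(f)$.

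The heart of the argument is the combinatorial identity $\frac{1}{N!}\sum_\sigma c_{\sigma(i)}=\frac{1}{N}\sum_j c_j$ combined with the single invocation of Lemma 3.2; the rest is bookkeeping. The main obstacle, modest though it is, is the simple-function reduction: one must choose the approximating $f_n$ with the prescribed equal-measure structure on $E$ while still obtaining convergence in $L^1(X)$. This is routine given nonatomicity of $\mu$ but deserves brief justification in the written proof.
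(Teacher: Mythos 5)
Your proposal is correct and follows essentially the same route as the paper: first the equal-measure simple case on $E$, where averaging the permuted functions $f_\sigma$ (each a strict rearrangement of $f$, hence in the orbit closure by Lemma 3.2) produces $f'$ exactly, then an $L^1$-approximation to pass to general $f$. The paper phrases the limiting step more compactly via the observation that $\text{conv}_1(\cdot)$ is $1$-Lipschitz in Hausdorff distance with respect to the $L^1$ metric, whereas you carry out the equivalent bookkeeping explicitly; both are fine.
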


\begin{proof}
If two functions $f,h\in L^1(X)$ are at $L^1$ distance $\le\var$, then their $\Ham(\omega)$ orbits are at Hausdorff distance
$\le\var$, and so are therefore conv$_1(f)$ and conv$_1(h)$. Hence, given $E$, if the lemma holds for a sequence $f=f_k$, 
$k=1,2,\ldots$, and $f_k\to f_0$ in $L^1$, then the lemma will hold for $f_0$ as well.

Now suppose that $E$ is the disjoint union of $E_j$, $j=1,\ldots, m$, of equal measure, and $f=c_j$ is constant on 
each $E_j$.
If $\sigma$ is a permutation of $1,\dots,m$, define $f_\sigma\in L^1(X)$ by
\[
f_\sigma=c_{\sigma(j)} \quad\text{on } E_j,\qquad f_\sigma= f\quad\text{on } X\setminus E.
\]
As a strict rearrangement of $f$, by Lemma 3.2 $f_\sigma$ is in the closure of the $\Ham(\omega)$ orbit of $f$. Therefore
\[
f'=\sum_\sigma  f_\sigma/m!
\]
is indeed in conv$_1(f)$. Since any $f\in L^1(X)$ is the limit of functions of the above type, the claim follows. 

\end{proof}
\begin{proof}[Proof of Lemma 4.4]
Write $\chi_A$ for the characteristic function of a set $A$. By Lemma 3.2 there is a sequence $g_k\in\Ham(\omega)$ 
such that $\chi_S\circ g_k\to \chi_T$ in $L^1$. Two applications of Lemma 4.5 give that
\[
f'=\begin{cases}\fint_S f&\text{on } S\\ \fint_{X\setminus S} f&\text{on } X\setminus S\end{cases}\quad\text{and so}\quad
f''=\lim_k f'\circ g_k=\begin{cases}\fint_S f&\text{on }T\\\fint_{X\setminus S}f&\text{on } X\setminus T\end{cases}
\]
are in conv$_1(f)$.  Lemma 4.4. follows, since the left hand side in (4.2) is 
\[
\ge \int f''\xi=
 \fint_S f\int_T\xi+\fint_{X\setminus S}f\int_{X\setminus T}\xi=
\fint_S f\int\xi^+-\fint_{X\setminus S}f\int\xi^-.
\]
\end{proof}
\begin{proof}[Proof of Theorem 4.1]
If a function is continuous in the $\sup$ norm topology, we will say it is $\|\ \ \|_\infty$--continuous, and use similar terminology for
other topological notions. First assume that $p$ of the theorem is positively homogeneous as well. By Lemma 2.1 there is 
a family $\cF\subset L^1(X)$ such that
\begin{equation}  
p(\xi)=\sup\Big\{\int f\xi : f\in\cF\Big\}.
\end{equation}
If we replace $\cF$ by its $\Ham(\omega)$ orbit, the supremum in (4.3) will not change, for 
$$
\int (f\circ g)\xi=\int (\xi\circ g^{-1})f\le p(\xi\circ g^{-1})=p(\xi)\qquad\text{if } f\in\cF, \   g\in\Ham(\omega).
$$  
Therefore we may assume that the family $\cF$ in (4.3) is already invariant under $\Ham(\omega)$.
 Hence Lemma 4.3 gives 
$\sup_\cF \int |f| <\infty$. This implies $p$ is bounded on $\| \ \ \|_\infty$--bounded subsets of $C^\infty(X)$, and by Lemma 4.2 it is $\| \ \ \|_\infty$--continuous.

For general $p$, pick a number $c>p(0)$ and consider the Minkowski functional $q$ of the convex set $\{p<c\}$ (see e.g. \cite[pp. 315-317]{Sc}), 
\[q(\xi)=\inf\{\lambda\in(0,\infty) : p(\xi/\lambda)< c\} \in [0,\infty).\]
This is a convex, positively homogeneous, strict rearrangement invariant function, that is continuous---because locally bounded---in the topology of $C^\infty(X)$. By what we have already proved, it is $\| \ \|_\infty$--continuous, in particular, the set 
$U_c=\{q<1\}\supset\{p<c\}$ is $\| \ \|_\infty$--open. If $\xi\in U_c$ then $p(\xi/\lambda)< c$ with some $\lambda < 1$. Also 
$p(0)<c$. As $\xi$ is a point on the segment connecting 0, $\xi/\lambda$, convexity implies $p(\xi)<c$. Thus $p$ is 
bounded above on the $\| \ \|_\infty$--open set $U_c$, and by Lemma 4.2 it is continuous there. The theorem follows since $\bigcup_c U_c=C^\infty(X)$.
\end{proof}

\section{Extending convex functions}

The above ideas can be developed to prove that $p$ can be extended to $C(X)$ and, under an additional assumption, to
the Banach space $B(X)$ of bounded Borel functions, with the supremum norm. 
(Thus $L^\infty(X)$ is a quotient of $B(X)$, but
$B(X)$  is more natural to use in our setting.) 
\begin{defn}
If $V\subset B(X)$ is a vector subspace, we say that a function $p\colon V\to\bR$ is strongly continuous if $p(\xi_k)$ is 
convergent whenever
$\xi_k\in V$ is an almost everywhere convergent sequence of uniformly bounded functions. 
\end{defn}

The limit $\lim p(\xi_k)$ depends only on 
$\lim\xi_k=\xi$, since two such 
sequences can be combined into one sequence, converging to $\xi$.
\begin{thm}
Any continuous, convex, $\Ham(\omega)$ invariant $p\colon C^\infty(X)\to\bR$
has a unique continuous extension to $C(X)$; this extension is convex and 
$\Ham(\omega)$ (hence strict rearrangement)
invariant. If $p$ is  strongly continuous, then it has a unique strongly continuous  extension 
$q:B(X)\to\bR$. This extension is convex, and invariant under strict rearrangements.
\end{thm}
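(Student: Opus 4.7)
The plan is to extend $p$ in two stages: first by density to a $\|\cdot\|_\infty$-continuous function $\tilde p$ on $C(X)$, and then, under the strong continuity hypothesis, by a.e.-bounded approximation to $q$ on $B(X)$. In both stages strict rearrangement invariance will be obtained by combining the integral representation of Lemma 2.1 with Lemma 3.2, as in the proof of Theorem 3.1.

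For the $C(X)$-extension, Theorem 4.1 makes $p$ continuous in $\|\cdot\|_\infty$; convexity upgrades this standardly to local Lipschitz continuity, hence uniform continuity on $\|\cdot\|_\infty$-bounded subsets of $C^\infty(X)$. Since $C^\infty(X)$ is $\|\cdot\|_\infty$-dense in the Banach space $C(X)$, this gives a unique $\|\cdot\|_\infty$-continuous extension $\tilde p\colon C(X)\to\bR$. Convexity of $\tilde p$ and its $\Ham(\omega)$-invariance (each $g\in\Ham(\omega)$ acting as a $\|\cdot\|_\infty$-isometry) pass to the limit from the dense $C^\infty(X)$. For strict rearrangement invariance I would extend the representation of Lemma 2.1 to $\tilde p$ on $C(X)$: by Hahn-Banach, $\tilde p$ is the supremum of continuous affine minorants $\xi\mapsto a+\langle\mu,\xi\rangle$ with $\mu\in C(X)^*\subset\cD'(X)$; each such $\mu$ is in particular a Fr\'echet-continuous distribution, whose regularization $R_\lambda\mu$ from Section~2 produces smooth densities $f\omega^n$ that may be absorbed into the family $\cA$ of Lemma 2.1, and $R_\lambda\mu\to\mu$ weak-$*$ against $C(X)$ (using that $\xi\circ g^t\to\xi$ uniformly on compact $X$ for $\xi\in C(X)$). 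Hence $\tilde p(\xi)=\sup\{a+\int f\xi:(a,f)\in\cA\}$ on all of $C(X)$. Given equidistributed $\xi,\eta\in C(X)$, Lemma 3.2 supplies $g_k\in\Ham(\omega)$ with $\eta\circ g_k\to\xi$ in $L^1$; for each fixed smooth (hence bounded) $f$ one has $a+\int f\xi=\lim_k(a+\int f(\eta\circ g_k))\le\tilde p(\eta)$, and taking the supremum plus symmetry gives $\tilde p(\xi)=\tilde p(\eta)$.

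For the $B(X)$-extension under strong continuity, I choose for each $\xi\in B(X)$ a uniformly bounded $\xi_k\in C^\infty(X)$ with $\xi_k\to\xi$ a.e.\ (available by Lusin and mollification) and set $q(\xi):=\lim p(\xi_k)$; the remark following Definition 5.1 makes this well-defined, and uniqueness is then automatic. Convexity and $\Ham(\omega)$-invariance of $q$ descend by taking a.e.\ bounded limits. To verify strong continuity of $q$ on $B(X)$, given a uniformly bounded $\eta_k\to\eta$ a.e.\ in $B(X)$ I pick smooth $\phi_k$ with $\|\phi_k\|_\infty$ uniformly bounded, $\|\phi_k-\eta_k\|_{L^1}\le 2^{-k}$, and $|p(\phi_k)-q(\eta_k)|\le 1/k$. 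Then $\phi_k\to\eta$ in $L^1$; along any a.e.\ convergent subsequence, strong continuity of $p$ forces $p(\phi_k)\to q(\eta)$, hence $q(\eta_k)\to q(\eta)$ along it, and a standard subsequence-of-subsequences argument promotes this to convergence of the full sequence. Strict rearrangement invariance of $q$ now drops out: for equidistributed $\xi,\eta\in B(X)$, Lemma 3.2 supplies $g_k\in\Ham(\omega)$ with $\eta\circ g_k\to\xi$ in $L^1$, hence a.e.\ along a subsequence (still bounded by $\|\eta\|_\infty$), so strong continuity of $q$ combined with $q(\eta\circ g_k)=q(\eta)$ forces $q(\xi)=q(\eta)$.

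The main obstacle is the representation step in the $C(X)$-argument: one must verify that the Section~2 smoothing $R_\lambda$, designed to converge weak-$*$ against test functions in $\cD(X)=C^\infty(X)$, in fact converges weak-$*$ against the larger space $C(X)$ of continuous test functions; this is where compactness of $X$ and uniform continuity of $t\mapsto\xi\circ g^t$ are used essentially. A secondary subtlety is the diagonal subsequence argument propagating strong continuity from $p$ on $C^\infty(X)$ to $q$ on $B(X)$, since the natural mode of convergence in that step is $L^1$ while the definition demands a.e.\ convergence.
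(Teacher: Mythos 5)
Your proposal is correct and follows essentially the same route as the paper: the $C(X)$ part is the density extension of a convex function that is uniformly continuous on bounded sets (this is exactly the content of the paper's Lemma 5.3), and the $B(X)$ part is verbatim the paper's construction --- Lusin approximation by uniformly bounded smooth sequences, well-definedness via the remark after Definition 5.1, convexity by passing to limits, strong continuity by the $L^1$/a.e.\ diagonal subsequence argument, and rearrangement invariance from Lemma 3.2 plus strong continuity. The one place you genuinely diverge is strict rearrangement invariance of the $C(X)$-extension: the paper disposes of this with the parenthetical ``hence,'' whereas you supply an explicit argument by re-deriving the Lemma 2.1 representation over $C(X)^*$ (Radon measures) and pushing the Section 2 regularization through against merely continuous test functions. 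The obstacle you flag is real but harmless: every distribution occurring in the iteration of Lemma 2.3 starting from a Radon measure is again a Radon measure of total variation at most that of the original, so $\langle R_\lambda h-h,\xi\rangle$ is controlled by $\|h\|\,\sup_{t\in\text{supp}\,\chi(\lambda\,\cdot)}\|\xi\circ g^t-\xi\|_\infty$, which tends to $0$ by uniform continuity on the compact $X$; your extra care here actually fills in a step the paper leaves implicit, at the cost of a somewhat longer argument.
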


Since 
$C^\infty(X)$ is dense in $C(X)$, and $p$ is known to be continuous in supremum norm, for the first part of Theorem 5.2 
one only needs to prove that a continuous extension exists. This is a special case of the following: 

\begin{lem}
Let $W$ be a locally convex topological vector space over $\bR$, $V\subset W$ a dense subspace. Any continuous, 
convex  $p:V\to\bR$ can be  extended to a continuous $q:W\to\bR$.
\end{lem}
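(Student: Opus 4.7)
The plan is to build $q$ via a Hahn--Banach representation of $p$, extend the corresponding affine forms from $V$ to $W$, and recombine. Concretely, by the Banach--Hahn theorem (as already used in Lemma 2.1), $p$ equals the pointwise supremum of its continuous affine minorants on $V$:
$$
p(v)=\sup\bigl\{a+\ell(v):(a,\ell)\in\cC\bigr\},\qquad v\in V,
$$
where $\cC$ is the family of pairs $(a,\ell)$ with $a\in\bR$, $\ell$ a linear form on $V$ continuous in the subspace topology inherited from $W$, and $a+\ell\le p$ on $V$. Each such $\ell$ extends uniquely to a continuous linear $\tilde\ell\colon W\to\bR$: linearity makes $\ell$ uniformly continuous on $V$, so for any net $v_\alpha\in V$ converging to some $w\in W$, the net $v_\alpha$ is Cauchy and $\ell(v_\alpha)$ is Cauchy in $\bR$, with limit depending only on $w$. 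Define
$$
q(w)=\sup\bigl\{a+\tilde\ell(w):(a,\ell)\in\cC\bigr\},\qquad w\in W.
$$
Then $q|_V=p$, and $q$ is convex and lower semicontinuous on $W$ as a pointwise supremum of continuous affine functions.

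To upgrade $q$ to a continuous function, I would appeal to Lemma 4.2 after showing $q$ is locally bounded above. For any $v_0\in V$, continuity of $p$ at $v_0$ gives an open convex symmetric $W$-neighborhood $\tilde U$ of $0$ with $p\le M$ on $V\cap(v_0+\tilde U)$. For any $w$ in the $W$-open set $v_0+\tilde U$, density of $V$ produces a net $v_\alpha\in V\cap(v_0+\tilde U)$ converging to $w$; then for each $(a,\ell)\in\cC$, $a+\ell(v_\alpha)\le p(v_\alpha)\le M$, and passing to the limit yields $a+\tilde\ell(w)\le M$. Hence $q\le M$ on the entire open set $v_0+\tilde U$, so $q$ is locally bounded above near every point of $V$.

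The main obstacle is extending this boundedness from a $W$-neighborhood of $V$ to all of $W$; \emph{a priori}, $q(w)$ could be $+\infty$ for some $w$ far from $V$. I would attack this by exploiting the standard convexity propagation of bounds: if $p\le M$ on $V\cap(v_0+\tilde U)$ with $\tilde U$ convex and symmetric, and $v_1\in V\cap(v_0+\tilde U/2)$, then $v_1+\tilde U/2\subset v_0+\tilde U$ and consequently $p\le M$ on $V\cap(v_1+\tilde U/2)$ as well; the size of the neighborhood does not collapse as the base point drifts inside $v_0+\tilde U/2$. Given any $w\in W$, I would build an approximating sequence $v_n\in V$ converging to $w$ that moves roughly halfway toward $w$ at each step, chaining this propagation to show that $w$ eventually lies in some $v_n+\tilde U(v_n)$ on which $q$ is bounded. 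Then $q(w)<\infty$, and Lemma 4.2 provides continuity of $q$ at $w$; the delicate point is verifying that the neighborhood sizes at the chained base points can be controlled so that they still catch $w$, which should follow from combining density of $V$ with the convexity propagation above.
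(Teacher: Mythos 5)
Your route (Hahn--Banach representation of $p$ by continuous affine minorants, extension of each minorant to $W$, and definition of $q$ as their pointwise supremum) is genuinely different from the paper's, which instead proves that $p$ is uniformly continuous on $V\cap U'$ for a $W$-open set $U'$ around each point of $W$ and extends by uniform continuity. Your route is viable in principle, and you correctly isolate the one real difficulty: showing that $q$ is bounded above on a $W$-neighborhood of \emph{every} point of $W$, not merely of points of $V$, since the neighborhood $\tilde U(v_0)$ furnished by continuity of $p$ at $v_0$ could a priori shrink as $v_0$ approaches a given $w\in W$, so that $w$ never lands in the interior of any $v_0+\tilde U(v_0)$.

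However, the chaining argument you propose to close this gap does not work. The propagation you describe passes from a bound on $V\cap(v_0+\tilde U)$ to a bound on $V\cap(v_1+\tilde U/2)$ for $v_1\in v_0+\tilde U/2$, then to $V\cap(v_2+\tilde U/4)$, and so on: the radii halve at each step while the base points move by at most $\tilde U/2+\tilde U/4+\cdots$, so the union of all the sets $v_n+\tilde U/2^n$ stays inside $v_0+2\tilde U$ and the chain can never reach a point $w$ outside the original neighborhood. What actually closes the gap is that the neighborhood radius can be taken \emph{uniform} over base points, by exploiting that $V$ is a linear subspace: choose a symmetric convex open $U_0$ with $p\le M$ on $V\cap 4U_0$ (continuity at $0\in V$); then for \emph{any} $v_1\in V$ and any $v\in V\cap(v_1+2U_0)$, writing $v=\tfrac12(2v_1)+\tfrac12\bigl(2(v-v_1)\bigr)$ with $2(v-v_1)\in 4U_0$ gives $p(v)\le\tfrac12\bigl(p(2v_1)+M\bigr)$. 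The radius $2U_0$ does not depend on $v_1$; only the bound does. Density of $V$ then places every $w\in W$ inside some open set $v_1+2U_0$ on which $p$, and hence your $q$, is bounded above, and Lemma 4.2 finishes your argument. This uniform-radius step is precisely the opening move of the paper's proof, so your proposal becomes correct once you replace the halving chain by it.
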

\begin{proof}
First we show that any $w\in W$ has a convex neighborhood $U$ such that $p$ is bounded on $V\cap U$. By continuity, 
there certainly is a symmetric, convex 
neighborhood $U_0\subset W$ of $0$ such that $p$ is bounded on $V\cap 4U_0$. Now $w+2U_0$ is a neighborhood of
$w$, and if $v_1\in V$ is sufficiently close to $w$, then $U=v_1+2U_0$ is also. For any $v\in V\cap U$  convexity implies
\[
2p(v)\le p(2v_1)+p\big(2(v-v_1)\big).
\]
Since $v-v_1\in 2U_0$, the right hand side is bounded as $v$ varies in $V\cap U$. Thus $p$ is bounded above on 
$V\cap U$. 
But then  $p(v)+p(2v_1-v)\ge 2p(v_1)$ gives that $p$ is also bounded below. Set $s=\sup_U |p|$.

We let $U'=v_1+U_0$ and show that $p$ is uniformly continuous on $V\cap U'$.  For suppose $\lambda\in(0,\infty)$.
If $u,v\in V\cap U'$ and $v-u\in U_0/\lambda$, then $v+\lambda(v-u)\in v_1+U_0+U_0=U$, hence by convexity
\[ 
p(v)-p(u)\le\frac{p\big(v+\lambda(v-u)\big)-p(u)}{1+\lambda}\le\frac {2s}{1+\lambda}.
\]
Since the roles of $u,v$ are symmetric, this indeed proves locally uniform continuity; which in turn implies continuous extension.
 \end{proof}
 \begin{proof}[Proof of Theorem 5.2]
We have already seen that the first half of the theorem follows from Lemma 5.3. As to the uniqueness of 
extension to $B(X)$, we 
note that Lusin's theorem implies that any $\xi\in B(X)$ is the a.e. limit of a uniformly bounded sequence
of continuous, hence also of
smooth functions $\xi_k$. Therefore at $\xi$ the extension of 
$p$ must take the  value $\lim_k p(\xi_k)$, so it is unique. What remains is to construct the required extension $q$. 

If $\xi\in B(X)$, predictably 
we let $q(x)=\lim_kp(\xi_k)$, where the uniformly bounded sequence $\xi_k\in C^\infty(X)$ converges
 to $\xi$ a.e. As we saw, this is 
independent of the choice of the sequence $\xi_k$. Clearly $p=q$ on $C^\infty(X)$. If uniformly bounded 
$\eta_k\in C^\infty(X)$ converge to $\eta\in B(X)$ a.e., and 
$\lambda\in [0,1]$, then 
\begin{multline*}
q\big(\lambda \xi+(1-\lambda)\eta\big)=\lim_k p\big(\lambda \xi_k+(1-\lambda)\eta_k\big)\\ 
\le\lim_k \lambda p(\xi_k)+(1-\lambda)p(\eta_k)=\lambda q(\xi)+(1-\lambda)q(\eta),
\end{multline*}
i.e., $q$ is convex. It is also strongly continuous. For this it suffices to show that if uniformly bounded $\xi_k\in B(X)$ 
converge to $\xi$ a.e., then a subsequence of $q(\xi_k)$ tends to $q(\xi)$. By dominated convergence,
\begin{equation}
\lim_k\int|\xi_k-\xi|=0.
\end{equation}
Let each $\xi_k$ be the a.e. limit of a uniformly bounded sequence $\xi_k^i\in C^\infty(X)$, as $i\to\infty$. We can arrange that 
the double sequence $\xi_k^i$ is also uniformly bounded. Thus 
$\lim_{i\to\infty} p(\xi_k^i)=q(\xi_k)$. For each $k$ choose $i=i_k$ so that $\eta_k=\xi^i_k$ satisfies
\begin{equation}
|p(\eta_k)-q(\xi_k)|<1/k,\qquad \int|\eta_k-\xi_k|<1/k.
\end{equation}
In view of (5.1) $\lim_k\int|\eta_k-\xi|=0$, so a subsequence  $\eta_{k(j)}$ converges to $\xi$ a.e. Hence, by (5.2)
\[
q(\xi)=\lim_j p(\eta_{k(j)})=\lim_j q(\xi_{k(j)}),
\]
as needed.

Finally, to show that $q$ is invariant under strict rearrangements, consider equidistributed $\xi,\eta\in B(X)$. By
Lemma 3.2 there are $g_k\in\Ham(\omega)$ such that $\int|\eta-\xi\circ g_k|\to 0$ as $k\to\infty$. Choose uniformly
bounded $\xi_k\in C^\infty(X)$ converging to $\xi$ a.e. In particular, $\lim_k\int |\xi_k-\xi|= 0$. Then
\[
\lim_k\int|\xi_k\circ g_k-\eta|\le\limsup_k\int|(\xi_k-\xi)\circ g_k|+\limsup_k\int|\xi\circ g_k-\eta|=0.
\]
Again, this means that a subsequence of $\xi_k\circ g_k$ converges a.e. to $\eta$, whence
\[
q(\xi)=\lim_k p(\xi_k)=\lim_k p(\xi_k\circ g_k)=q(\eta),
\]
which proves that $q$ is indeed invariant under strict rerrangements.
\end{proof}

Here is the last theorem in this section.

\begin{thm}
If a strict rearrangement invariant convex $p:B(X)\to\bR$ is  strongly continuous, then it is Lipschitz continuous
on bounded sets.
\end{thm}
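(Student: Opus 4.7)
The overall strategy is to reduce to boundedness of $p$ on sup-norm balls in $B(X)$, and then invoke the standard convex-analysis argument that a convex function bounded on a larger ball is Lipschitz on a smaller concentric one: writing $\eta$ in the $R$-ball as a convex combination of a given $\xi$ in the $R$-ball with the extrapolated point $z = \eta + (R/\|\eta-\xi\|_\infty)(\eta-\xi)$, which lies in the $2R$-ball, convexity together with the bound $|p| \le M$ on the $2R$-ball yields $|p(\eta)-p(\xi)| \le (2M/R)\|\eta-\xi\|_\infty$. The substantive work is therefore the boundedness, not the Lipschitz step.

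For boundedness, I would restrict $p$ to $C^\infty(X)$ and then transfer the resulting bound back to $B(X)$. The restriction $p|_{C^\infty(X)}$ is convex, strict rearrangement (hence $\Ham(\omega)$) invariant, and continuous in the Fr\'echet topology of $C^\infty(X)$, because Fr\'echet convergence implies uniform convergence with a uniform bound, which triggers strong continuity. Theorem 4.1 then upgrades this to $\|\cdot\|_\infty$-continuity of the restriction, and Lemma 4.2 yields $M_R := \sup\{|p(\xi)| : \xi \in C^\infty(X),\ \|\xi\|_\infty \le R\} < \infty$ for every $R > 0$.

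To pass from $C^\infty(X)$ to $B(X)$, given $\xi \in B(X)$ with $\|\xi\|_\infty \le R$, I would construct a sequence $\xi_k \in C^\infty(X)$ with $\|\xi_k\|_\infty \le R$ and $\xi_k \to \xi$ almost everywhere; strong continuity then gives $p(\xi) = \lim p(\xi_k) \le M_R$, while the matching lower bound $p(\xi) \ge 2p(0) - M_R$ comes from $p(0) \le \tfrac12(p(\xi)+p(-\xi))$. The required approximation is standard: Lusin's theorem plus Tietze extension provides continuous $g_k\colon X \to [-R,R]$ with $g_k \to \xi$ a.e., and the heat semigroup $P_{t_k}$ associated to a background Riemannian metric (with $t_k \to 0$ chosen fast enough) then yields $\xi_k := P_{t_k} g_k \in C^\infty(X)$ with the desired properties, since $P_t$ is a sup-norm contraction and converges uniformly to the identity on continuous functions. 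I expect the mildly delicate point of the whole argument to be engineering this smooth approximation with simultaneous control of the sup-norm bound and a.e. convergence; everything else reduces directly to Theorem 4.1, Lemma 4.2, and the definition of strong continuity.
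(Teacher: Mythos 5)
Your reduction of Lipschitz continuity to boundedness on bounded sets is fine (it is the same elementary convexity argument the paper uses), and your transfer of a bound from $C^\infty(X)$ to $B(X)$ via Lusin-type approximation and strong continuity is also sound. The gap is in the step where you claim that Theorem 4.1 plus Lemma 4.2 yield $M_R=\sup\{|p(\xi)|:\xi\in C^\infty(X),\ \|\xi\|_\infty\le R\}<\infty$. Lemma 4.2 goes in the opposite direction: it says that a convex function bounded above on an open set is continuous there. The converse implication you need --- that a $\|\ \ \|_\infty$-continuous convex function is bounded on $\|\ \ \|_\infty$-balls --- is false in general on infinite-dimensional normed spaces (continuity only gives local boundedness, and a covering of a ball by small balls on which $p$ is bounded gives no uniform bound). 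The proof of Theorem 4.1 does establish boundedness on balls in the positively homogeneous case, via $p(\xi)\le(\sup_{\cF}\int|f|)\,\|\xi\|_\infty$, but for general $p$ it only produces continuity, not a uniform bound on each ball. So the crucial quantity $M_R$ is never actually shown to be finite in your argument, and nothing in your proposal uses strong continuity to obtain it --- you only use strong continuity for the transfer step.

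This is precisely where the paper's proof does something different and where the hypothesis of strong continuity earns its keep. The paper argues by contradiction: if $\xi_k\in B(X)$ were uniformly bounded with $|p(\xi_k)|\to\infty$, replace each $\xi_k$ by the equidistributed function $\xi_k^\star\circ\theta$ (where $\xi_k^\star$ is the decreasing rearrangement and $\theta$ is the measure-preserving map of Lemma 5.5), note that the uniformly bounded decreasing functions $\xi_k^\star$ have a pointwise convergent subsequence by Helly's selection theorem, and conclude from strong continuity that $p(\xi_k)=p(\xi_k^\star\circ\theta)$ converges along that subsequence --- a contradiction. Rearrangement invariance is used to compactify the bounded set modulo equidistribution, and strong continuity (convergence of $p$ along merely a.e.\ convergent, uniformly bounded sequences) is exactly the continuity notion compatible with that compactness. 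To repair your proof you would need to supply an argument of this kind for the boundedness of $M_R$; the route through Theorem 4.1 and Lemma 4.2 does not provide one.
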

\begin{lem}
There is a continuous $\theta:X\to[0,\mu(X)]$ that is smooth away from the preimage of finitely many $t\in[0,\mu(X)]$, 
and that preserves measure (the target is endowed with Lebesgue measure).

\end{lem}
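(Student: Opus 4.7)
The plan is to build $\theta$ as $F\circ f$, where $f$ is a Morse function on $X$ and $F$ is the cumulative distribution function of the pushforward measure $f_*\mu$ on an interval. Any compact manifold admits a Morse function, so pick a smooth $f:X\to[a,b]$ with only finitely many critical values $c_1<\ldots<c_N$ and with $a=\min f$, $b=\max f$. Connectedness and continuity of $f$ give surjectivity onto $[a,b]$ by the intermediate value theorem. Set $F(s)=\mu(\{f\le s\})$ for $s\in[a,b]$, so $F(a)=0$, $F(b)=\mu(X)$, and $F$ is non-decreasing.

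Next I would verify that $F:[a,b]\to[0,\mu(X)]$ is a continuous, strictly increasing homeomorphism, hence smooth away from $c_1,\ldots,c_N$. Continuity amounts to $\mu(f^{-1}(\{s\}))=0$ for every $s$, which holds because $f^{-1}(\{s\})$ is, up to finitely many points, a smooth hypersurface, and $\omega^n$ is a smooth volume form. Strict monotonicity follows because for any $a\le s<t\le b$ the set $f^{-1}((s,t))$ is open and nonempty (again by IVT and surjectivity), and every nonempty open set has positive $\mu$-measure. Smoothness on the open set $[a,b]\setminus\{c_1,\ldots,c_N\}$ follows from the coarea formula, which gives the derivative $F'(s)=\int_{f^{-1}(s)}|\nabla f|^{-1}\,d\sigma$ at regular values $s$ (the metric used is irrelevant; any auxiliary Riemannian metric will do to make sense of the formula).

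Now define $\theta(x)=F(f(x))$. Continuity of $\theta$ is immediate. Smoothness: $\theta$ is smooth at every $x$ with $f(x)\notin\{c_1,\ldots,c_N\}$, that is, on $X\setminus\theta^{-1}(\{F(c_1),\ldots,F(c_N)\})$, which is the complement of the preimage of finitely many points in $[0,\mu(X)]$. For the measure-preserving property, observe that by the definition of $F$,
\[
(f_*\mu)\big([a,s]\big)=F(s),
\]
so for any $t\in[0,\mu(X)]$,
\[
\theta_*\mu\big([0,t]\big)=\mu\big(f^{-1}(F^{-1}[0,t])\big)=(f_*\mu)\big([a,F^{-1}(t)]\big)=F(F^{-1}(t))=t,
\]
where the strict monotonicity and continuity of $F$ were used to write $F^{-1}[0,t]=[a,F^{-1}(t)]$. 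Thus $\theta_*\mu$ is Lebesgue measure on $[0,\mu(X)]$, i.e., $\theta$ is measure-preserving.

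The construction has no serious obstacle; the only subtlety is verifying that $F$ is \emph{strictly} increasing, which relies both on $f$ being continuous and surjective (to ensure $f^{-1}((s,t))\neq\emptyset$) and on $\omega^n$ being a genuine volume form (to ensure nonempty open sets have positive measure). Both hold by the standing hypothesis that $(X,\omega)$ is a connected compact symplectic manifold.
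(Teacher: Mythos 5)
Your proof is correct and follows essentially the same route as the paper: compose a Morse function with its (cumulative) distribution function, which is a homeomorphism onto $[0,\mu(X)]$ and a diffeomorphism away from the critical values. The only difference is cosmetic (the paper uses $\mu(\zeta<t)$ rather than $\mu(f\le s)$, which agree since level sets are null), and you supply a bit more detail on continuity and strict monotonicity.
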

\begin{proof}
If $\zeta\in C^\infty(X)$ is a Morse function, its reverse distribution function
\[
\lambda(t)=\mu(\zeta<t),\qquad t\in[\min\zeta,\max\zeta],
\]
is continuous, strictly increasing, and smooth away from the set $C$ of critical values of $\zeta$. It is a homeomorphism 
$[\min\zeta,\max\zeta]\to[0,\mu(X)]$, and a diffeomorphism away from $C$. The function 
$\theta=\lambda\circ\zeta$ will therefore do, as
\[
\mu(\theta<s)=\mu(\zeta<\lambda^{-1}(s))=\lambda(\lambda^{-1}(s))=s,\qquad s\in[0,\mu(X)].
\]
\end{proof}
We will need the notion of decreasing rearrangement of a measurable $\xi:X\to\bR$. It is the decreasing, say, upper 
semicontinuous function
$\xi^\star:[0,\mu(X)]\to\bR$ that is equidistributed with $\xi$. Thus $\mu(s\le \xi\le t)$ is equal to the length of the maximal
interval on which $s\le\xi^\star\le t$. In particular, 
\begin{equation}
\mu(\xi\ge\xi^\star(s))=s.
\end{equation}
The upper semicontinuity requirement translates to
left continuity of the decreasing function $\xi^\star$, which differs from the more usual  convention of right continuity, but
the difference is inconsequential. Obviously, with $\theta$ of Lemma 5.5 $\xi$ and $\xi^\star\circ\theta$ are
equidistributed. 
\begin{lem}   
If $\xi\in C(X)$, then $\xi^\star$ is continuous.
\end{lem}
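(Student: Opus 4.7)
The plan is to use monotonicity: since $\xi^\star$ is decreasing on $[0,\mu(X)]$, any discontinuity must be a jump, and I aim to show that such a jump produces a ``gap'' in the range of $\xi^\star$ that is forbidden by equidistribution together with the continuity of $\xi$. So I would argue by contradiction, supposing $\xi^\star$ has a jump, which after left-continuity and decreasing monotonicity yields reals $a<b$ and a point $s_0 \in [0,\mu(X)]$ such that $\xi^\star(s)\ge b$ for $s\le s_0$ while $\xi^\star(s)\le a$ for $s>s_0$.

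Next I would translate the gap into the distribution function $F(t)=\mu\{\xi>t\}$. By equidistribution (cf.\ (5.3)), for every $t\in(a,b)$ the set $\{s:\xi^\star(s)>t\}$ coincides with $[0,s_0]$, so $F(t)=s_0$ is constant on $(a,b)$. Standard right-continuity of $F$ at $a$ and the identity $\mu\{\xi\ge b\}=\lim_{t\uparrow b}F(t)$ then give
\[
\mu\{a<\xi<b\}=\mu\{\xi>a\}-\mu\{\xi\ge b\}=s_0-s_0=0.
\]
Since $\xi$ is continuous, $\xi^{-1}((a,b))$ is open in $X$; and because $\omega^n$ is a smooth nowhere vanishing volume form, every nonempty open subset of $X$ has positive $\mu$-measure. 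Hence $\xi^{-1}((a,b))=\emptyset$.

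To close the argument I would invoke connectedness: $\xi(X)$ is a connected (hence interval) subset of $\bR$. From the construction, $0<s_0<\mu(X)$ forces both $\mu\{\xi>a\}>0$ (so $\xi$ takes values $>a$) and $\mu\{\xi\le a\}>0$ (so $\xi$ takes values $\le a$, and since $b>a$ is a limit of values attained, in fact values arbitrarily close to $b$ from above as well), contradicting the fact that $(a,b)$ is omitted by $\xi$. The main obstacle is the bookkeeping around the endpoints: one must check that a putative jump really does occur in the interior $(0,\mu(X))$, or equivalently that the range of $\xi^\star$ meets both $(-\infty,a]$ and $[b,\infty)$; this in turn reduces to observing that $\xi^\star(0)=\max\xi$ and $\xi^\star(\mu(X))=\min\xi$ when $\xi$ is continuous, which follows directly from equidistribution and left-continuity.
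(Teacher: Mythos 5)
Your argument is correct and is essentially the paper's proof: a jump of $\xi^\star$ forces a gap $(a,b)$ in the range of $\xi$ of measure zero, which is impossible because $\xi$ is continuous, $X$ is connected, and nonempty open sets have positive $\mu$-measure. The only difference is presentational (you route the computation through the distribution function $F(t)=\mu\{\xi>t\}$ and handle the endpoint cases explicitly, while the paper works directly with $\mu(\xi\ge\xi^\star(s))=s$), so no further changes are needed.
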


\begin{proof}
Since $\xi^\star$ is always u.s.c., i.e., left continuous, all we need to show is that if $s_j\in[0,\mu(X)]$ decreases to $s$, then $\lim_j\xi^\star(s_j)$ cannot be $>\xi^\star(s)$. Suppose it were, and let $\xi^\star(s)<\alpha<\beta<\lim_j \xi^\star(s_j)$. 
Then $\xi^{-1}(\alpha,\beta)\subset X$ would be a nonempty open subset, of positive measure, contradicting (cf.(5.1))
\[\mu(\xi\ge\xi^\star(s_j))=s_j\to s=\mu(\xi\ge\xi^\star(s)).\]
\end{proof}

\begin{proof}[Proof of Theorem 5.4]
Let $\theta$ be as in Lemma 5.5. We start by showing that $p$ is bounded on bounded sets. Otherwise 
there would be a bounded sequence $\xi_k\in B(X)$ such that  $|p(\xi_k)|\to \infty$. The 
decreasing rearrangements $\xi_k^\star$ are uniformly bounded,
hence by Helly's theorem contain a pointwise convergent subsequence. But along that subsequence $\xi_k^\star\circ\theta$
converges pointwise and therefore by strong continuity
\[
p(\xi_k)=p(\xi_k^\star\circ\theta)
\]
also converges, a contradiction. 

Now boundedness on bounded sets implies Lipschitz continuity on bounded sets. 
For suppose $\xi\neq\eta$ have norm $\le R$, 
and let $\rho$ be the unit vector in the direction of $\xi-\eta$. With $M=\sup_{||\zeta||_\infty\le R+1}|p(\zeta)|$, by convexity
\[
\frac{p(\xi)-p(\eta)}{||\xi-\eta||_\infty}\le\frac{p(\xi+\rho)-p(\eta)}{||\xi+\rho-\eta||_\infty}\le 2M.
\]
The roles of $\xi,\eta$ being symmetric, we obtain Lipschitz continuity.
\end{proof}

\section{Proof of Theorem 1.3}

To simplify notation, we will assume $\mu(X)=1$. 
By Lemma 2.1 a $\Ham(\omega)$ invariant convex, continuous, $p\colon C^\infty(X)\to\bR$ can be written
\begin{equation}
p(\xi)=\sup\Big\{a+\int f\xi : (a,f)\in\cA\Big\}
\end{equation}
with a family $\cA\subset \bR\times C^\infty(X)$, that can be chosen convex and invariant under $\Ham(\omega)$. 
The possible behaviors of $p$ described in Theorem 1.3 are determined by whether all functions $f$ that occur in $\cA$ 
are
constant or not. 

If in $\cA$ only constant functions occur, then (6.1) gives $p(\xi)=p(\int\xi)$. 
Henceforward we will assume $\cA$ contains a pair $(a,f)$ with a nonconstant function $f$. According to (ii) of 
Theorem 1.3, we must 
estimate $p(\xi)$ from below with the $L^1$ norm of $\xi\in C^\infty(X)$. We do this do in a somewhat greater generality, that we will need in
the next section. 

\begin{lem}
Suppose $\cA\subset\bR\times L^1(X)$ is convex and invariant under $\Ham(\omega)$. For
$\xi\in L^\infty(X)$ let $q(\xi)=\sup_{(a,f)\in\cA}a+\int f\xi$. If $\cA$ contains a pair $(a,f)$ with $f$ 
nonconstant, then there are
$ a_0\in \bR$ and  $b\in(0,\infty)$
such that
\[
q(\xi)\ge a_0+b\int |\xi|\quad\text{ if }\quad\begin{cases} \int \xi=0,\quad{\text{or}}\\ 
\int \xi\ge0{\text\quad\text{and }\ \lim_{\bR\ni\lambda\to\infty}q(\lambda)>q(0), \quad\text{or}} \\
 \int\xi\le 0\quad\text{and }\ \lim_{\bR\ni\lambda\to-\infty}q(\lambda)>q(0).\end{cases}
\]
If $\cA\subset \{0\}\times L^1(X)$, then $a_0$ can be chosen $0$.
\end{lem}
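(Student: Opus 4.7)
My plan is to (i) reduce to two-valued elements of $\cA$ using convexity, $\Ham(\omega)$-invariance, and Lemma~4.5; (ii) apply the version of Lemma~4.4 with $\sigma=1/2$ to get a lower bound on $q(\xi)$ involving $\int_0^{1/2}\xi^\star$ and $\int\xi$; (iii) compare $\int_0^{1/2}\eta^\star$ with $\int|\eta|$ for mean-zero $\eta$; and (iv) absorb the residual linear-in-$\int\xi$ term by averaging with an auxiliary bound coming from the asymptotic hypothesis.

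For step~(i), since $\xi\in L^\infty(X)$ the functional $(a,f)\mapsto a+\int f\xi$ is $L^1$-continuous in $f$, so passing to the $L^1$-closure of $\cA$ in its $f$-component leaves $q$ unchanged while preserving convexity and $\Ham(\omega)$-invariance. Exactly as in the proof of Lemma~4.4, two applications of Lemma~4.5 then show that for every $(a,f)\in\cA$ and every measurable $S\subset X$ of positive measure the pair $(a,f_S)$ lies in $\cA$, where $f_S=\fint_S f$ on $S$ and $\fint_{X\setminus S}f$ off $S$; in particular, with $S=X$, every constant pair $(a,\int f)$ belongs to $\cA$, and by Lemma~3.2 every strict rearrangement of $f_S$ does too.

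For step~(ii), fix the distinguished nonconstant $(a_0,f_0)\in\cA$; by Lemma~3.2 I may replace $f_0$ by $f_0^\star\circ\theta$, with $\theta\colon X\to[0,1]$ measure-preserving as in Lemma~5.5, so that $S_0:=\theta^{-1}([0,1/2])$ has measure $1/2$ and
\[
\alpha:=\fint_{S_0}f_0=2\int_0^{1/2}f_0^\star\;>\;\int f_0\;>\;2\int_{1/2}^1 f_0^\star=:\beta=\fint_{X\setminus S_0}f_0.
\]
Set $\delta=\alpha-\beta>0$; note $(\alpha+\beta)/2=\int f_0$. Then by step~(i), $(a_0,f_{S_0})\in\cA$ with $f_{S_0}=\alpha\chi_{S_0}+\beta\chi_{X\setminus S_0}$, and taking the supremum of $\int(f_{S_0}\circ g)\xi$ over $g\in\Ham(\omega)$ (using Lemma~3.2 to realise arbitrary sets of measure $1/2$ as $L^1$-limits of $g^{-1}S_0$) yields, for every $\xi\in L^\infty(X)$ with $m:=\int\xi$,
\[
q(\xi)\ge a_0+\delta\int_0^{1/2}\xi^\star+\beta m.
\]
The core of step~(iii) is the rearrangement inequality $\int_0^{1/2}\eta^\star\ge\tfrac14\int|\eta|$ for mean-zero $\eta\in L^\infty(X)$. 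I would prove it by splitting on $\sigma:=\mu\{\eta>0\}$: if $\sigma\le 1/2$ and $A:=\int_0^{1/2}\eta^\star$, monotonicity of $\eta^\star$ and $\int_{1/2}^1\eta^\star=-A$ give $\eta^\star(1/2)\ge-2A$, hence $A\ge\int\eta^+-2A(1/2-\sigma)$, which rearranges to $A\ge\int|\eta|/(4(1-\sigma))\ge\int|\eta|/4$; the case $\sigma>1/2$ is symmetric. Writing $\xi=\eta+m$ with $\eta=\xi-m$ mean-zero (so $\xi^\star=\eta^\star+m$) and using $\delta/2+\beta=\int f_0$ together with $\int|\eta|\ge\int|\xi|-|m|$, the bound of step~(ii) becomes
\[
q(\xi)\ge a_0+\tfrac{\delta}{4}\int|\xi|+m\Bigl(\int f_0-\tfrac{\delta}{4}\operatorname{sgn}(m)\Bigr),
\]
which for $m=0$ is exactly the first alternative of the lemma with $b=\delta/4$.

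For step~(iv), consider $m>0$; the case $m<0$ is symmetric under $\eta\mapsto-\eta$, using the other alternative of the hypothesis. If $\int f_0\ge\delta/4$ the residual term is already nonnegative and we are done. Otherwise the hypothesis $\lim_{\lambda\to\infty}q(\lambda)>q(0)$ forces some $(a_1,f_1)\in\cA$ with $c_1:=\int f_1>0$, whence by step~(i) the constant $(a_1,c_1)\in\cA$ and $q(\xi)\ge a_1+mc_1$. Averaging the two lower bounds with weight $t=c_1/(c_1+\delta/4-\int f_0)\in(0,1)$ annihilates the coefficient of $m$ and produces $q(\xi)\ge a_0'+b\int|\xi|$ with $a_0'=ta_0+(1-t)a_1$ and $b=t\delta/4>0$. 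Taking the infimum of the $a_0'$'s and the $b$'s obtained across the three alternatives yields uniform constants, and in the positively homogeneous case every $a$ occurring in $\cA$ is zero, forcing $a_0'=0$ automatically. The main obstacle is the quantitative rearrangement inequality in step~(iii): although elementary, getting the sharp constant $1/4$ requires the case-splitting described above; the remaining steps are essentially the propagation of convex combinations through the convex, $\Ham(\omega)$-invariant structure of $\cA$.
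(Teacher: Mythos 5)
Your proof is correct, and its overall architecture parallels the paper's: both arguments reduce to two-valued averaged test functions $(a,f_S)$ produced by Lemma 4.5 together with the convexity and invariance of $\cA$ (which, as you note and the paper leaves implicit, really requires passing to the closed convex hull of $\cA$ in $\bR\times L^1(X)$); both derive an intermediate bound of the form $q(\xi)\ge a+c\int|\xi|+(\text{linear in }\int\xi)$; and both then kill the linear term by combining with the affine bound $q(\xi)\ge a_1+m\int f_1$ coming from the asymptotic hypothesis --- your convex-combination weighting is the same elimination the paper performs by multiplying the two inequalities by $m$ and $s_1$ and adding. Where you genuinely diverge is in the intermediate estimate. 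The paper adapts the splitting set to $\xi$: it takes $T=\{\xi\ge0\}$, $\alpha=\mu(T)$, and a maximizing set $S_\alpha$, and to obtain a uniform constant it must show that $s_\alpha-i_{1-\alpha}$ is a continuous positive function of $\alpha$ and minimize it over the compact interval $[0,1]$ (using absolute continuity of $f\,d\mu$). You instead freeze the splitting at measure $1/2$, use the identity $\sup_{\mu(S)=1/2}\int_S\xi=\int_0^{1/2}\xi^\star$, and prove the elementary inequality $\int_0^{1/2}\eta^\star\ge\tfrac14\int|\eta|$ for mean-zero $\eta$; this trades the compactness argument for a short explicit rearrangement inequality, at the cost of a harmless loss in the constant. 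One cosmetic point: the lemma's final clause is hypothesized as $\cA\subset\{0\}\times L^1(X)$ rather than positive homogeneity of $q$, but your observation that $a_0'$ is a convex combination of first coordinates, hence $0$, covers exactly that case.
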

\begin{proof}
Fix $(a,f)\in\cA$ with $f$ nonconstant. If $\alpha\in(0,1]$ let
\begin{equation}
s_\alpha=s_\alpha(f)=\sup_{\mu(E)=\alpha}\fint_Ef,\qquad 
i_\alpha=i_\alpha(f)=\inf_{\mu(E)=\alpha}\fint_E f,
\end{equation}
and let $s_0=\text{ess\,sup\,} f$, $i_0=\text{ess\,inf\,} f$. For every $\alpha>0$ there is an $S=S_\alpha\subset X$ 
of measure  $\alpha$ for which  $\fint_Sf=s_\alpha$. Indeed,  consider 
$$
u=\inf\big\{t\in\bR\colon \mu\{f>t\}\le \alpha\big\}.
$$
Since $\mu\{f>u\}\le\alpha\le\mu\{f\ge u\}$, any set $S$ of measure $\alpha$ sandwiched between $\{f>u\}$ and 
$\{f\ge u\}$ will provide the sup in (6.2). Similarly, $S'=X\setminus S$, of measure $1-\alpha$, satisfies
$i_{1-\alpha}=\fint_{S'}f$. This implies that $s_\alpha>i_{1-\alpha}$.
From the absolute continuity of $fd\mu$ with respect to $d\mu$ we deduce
that $s_\alpha,i_\alpha$ are continuous functions of $\alpha>0$; continuity trivially holds at $\alpha=0$ as well. Hence
\begin{equation}
2c=2c(f)=\min_{0\le\alpha\le1}(s_\alpha-i_{1-\alpha})>0, \quad 
2m=2m(f)=\max_{0\le\alpha\le 1}|s_\alpha|+|i_{1-\alpha}|<\infty.
\end{equation} 

Consider a $\xi\in L^\infty(X)$ and let $T=\{\xi\ge 0\}$. With $\alpha=\mu(T)$ and $S=S_\alpha$ as above,  Lemma 4.4 implies
\begin{equation}
q(\xi)\ge a+s_\alpha \int\xi^+-i_{1-\alpha}\int \xi^-=a+
\frac{s_\alpha-i_{1-\alpha}}2\int|\xi|+\frac{s_\alpha+i_{1-\alpha}}2\int\xi
\end{equation}
(even if $\alpha=0$). When $\int\xi=0$, by (6.3) we obtain $q(\xi)\ge a+c\int|\xi|$. 

Next suppose that $\lim_{\lambda\to\infty}q(\lambda)>q(0)$. There are $\lambda>0$ and $(a_1,f_1)\in\cA$ with
 $a_1+\int f_1\lambda>q(0)\ge a_1$; hence
$\int f_1>0$. Because $\cA$ is convex, we can arrange that
our fixed $(a,f)\in\cA$ already satisfies $\int f>0$. Let
$b=s_1c/(s_1+m)$. We will show that if $\int\xi\ge 0$, then $q(\xi)\ge a+b\int|\xi|$. Note that the constant 
function $f'=\int f$ is in conv$_1(f)$ according to 
Lemma 4.5, and $(a,f')$ is in $\cA$. Hence
$q(\xi)\ge a+\int f'\xi=a+s_1\int\xi$. By (6.4) $q(\xi)\ge a+c\int|\xi|-m\int\xi$. Combining these two we can eliminate 
$\int\xi$ and obtain
\[
mq(\xi)+s_1q(\xi)\ge (m+s_1)a+s_1c\int|\xi|,
\]
as needed. Finally, if $\lim_{\lambda\to-\infty}q(\lambda)>q(0)$, we choose $(a,f)\in\cA$ such that $f$ is nonconstant and
$\int f<0$. Letting  
$b=c(f)|s_1(f)|/(|s_1(f)|+m(f))$ we can similarly prove $q(\xi)\ge a+b\int|\xi|$ whenever $\int\xi\le 0$. This completes the 
proof of the lemma, and also of the theorem.
\end{proof}
\section{Proof of Theorem 1.4}

This was the theorem:
\begin{thm}
Given a $\Ham(\omega)$ invariant continuous norm $p$ on $C^\infty(X)$, there is a rearrangement invariant Banach 
function space on $X$ whose norm, restricted to $C^\infty(X)$, is equivalent to $p$.
\end{thm}

We will get to the notion of rearrangement invariant Banach spaces shortly, but first we 
formulate a few auxiliary results that we will  need. Let us say that two functions $\phi,\psi:X\to\bR$ 
are similarly ordered if $\big(\phi(x)-\phi(y)\big)\big(\psi(x)-\psi(y)\big)\ge 0$ for all $x,y\in X$. Put it differently, 
$\phi(x)>\phi(y)$ should imply $\psi(x)\ge\psi(y)$. In spite of what the language may suggest, this is not an equivalence
relation (all functions are similarly ordered as a constant). However, it is true that if $\phi$ and $\psi$ are similarly ordered,
and $U:\bR\to\bR$ is increasing, then $\phi$ and $U\circ\psi$ are also similarly ordered.

We will write $\phi\sim\psi$ for measurable functions $X\to\bR$ if they are equidistributed. The following lemma in one 
form or another is known and, like Lemmas 7.3, 7.4,  7.5, holds in any finite measure space $(X,\mu)$ without atoms.

\begin{lem}
Let $\phi_0\in L^1(X)$ be bounded below and $\psi_0\in L^\infty(X)$. 

(a) $\sup_{\phi\sim\phi_0}\int\phi\psi_0=\sup_{\psi\sim\psi_0}\int\phi_0\psi$.

(b) The suprema in (a) are attained, by $\phi$ and $\psi$ that are similarly ordered as $\psi_0$ and $\phi_0$.

(c) $\int\phi\psi$ is independent of the choice of $\phi\sim\phi_0$, $\psi\sim\psi_0$, as long as $\phi,\psi$ are similarly
ordered.
\end{lem}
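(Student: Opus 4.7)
The plan is to reduce everything to the decreasing rearrangements $\phi_0^\star,\psi_0^\star\colon[0,1]\to\bR$ (using $\mu(X)=1$, normalized at the start of Section 6). The classical Hardy--Littlewood rearrangement inequality, together with a measure-preserving sorting map, does all the work.

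First I would prove (c) via the layer cake formula. After adding constants so that $\phi,\psi\ge 0$ (which only shifts the integral by quantities depending solely on $\phi_0,\psi_0$),
$$\int_X\phi\psi\,d\mu=\int_0^\infty\!\int_0^\infty\mu\bigl(\{\phi>s\}\cap\{\psi>t\}\bigr)\,ds\,dt.$$
When $\phi$ and $\psi$ are similarly ordered, their superlevel sets form a totally ordered family under inclusion (modulo null sets), so $\mu(\{\phi>s\}\cap\{\psi>t\})=\min(\mu\{\phi>s\},\mu\{\psi>t\})$. The two distribution functions depend only on $\phi_0,\psi_0$, so the double integral does too, proving (c). A direct consequence, applied to the similarly ordered equidistributed pair $(\phi_0^\star,\psi_0^\star)$ on $[0,1]$, is that for any similarly ordered $\phi\sim\phi_0$, $\psi\sim\psi_0$,
$$\int_X\phi\psi\,d\mu=\int_0^1\phi_0^\star(t)\psi_0^\star(t)\,dt.$$

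Second, the same layer cake identity together with $\mu(A\cap B)\le\min(\mu A,\mu B)$ immediately yields the Hardy--Littlewood bound
$$\int_X\phi\psi_0\,d\mu\le\int_0^1\phi_0^\star(t)\psi_0^\star(t)\,dt$$
for every $\phi\sim\phi_0$, and similarly $\int\phi_0\psi\le\int\phi_0^\star\psi_0^\star$ for every $\psi\sim\psi_0$. Thus both suprema in (a) are bounded above by the same quantity.

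To match the bound and to produce the extremizers promised in (b), I would construct a measure-preserving map $\sigma\colon X\to[0,1]$ with $\psi_0=\psi_0^\star\circ\sigma$ almost everywhere. Away from the at most countably many values $c$ with $\mu\{\psi_0=c\}>0$, the formula $\sigma(x)=\mu\{\psi_0>\psi_0(x)\}$ already works; on each flat level set of positive measure one uses non-atomicity of $\mu$ (which comes from a smooth volume form) to split that level set measurably onto the appropriate subinterval of $[0,1]$. Setting $\phi=\phi_0^\star\circ\sigma$, one sees that $\phi\sim\phi_0$, that the pair $(\phi,\psi_0)$ is similarly ordered (both are decreasing functions of $\sigma$), and that change of variables gives $\int\phi\psi_0\,d\mu=\int_0^1\phi_0^\star\psi_0^\star\,dt$. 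Combined with Hardy--Littlewood this proves (b) and the first sup in (a); the second sup equals the same value by the symmetry $\phi_0\leftrightarrow\psi_0$. The main obstacle is the construction of $\sigma$ on flat level sets of $\psi_0$: everything else is a formal layer cake computation, while the sorting construction, though standard in non-atomic measure theory, is the only genuinely non-formal input.
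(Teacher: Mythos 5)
Your proof is correct, but it runs along a different track than the paper's. For (c) the paper works directly on $X$: it first takes $\psi_0$ simple, shows by an explicit combinatorial analysis that each of the quantities $m_j$, $M_j$, $\mu(B_j)$, $\mu(C_j)$, $\mu(D_j)$, $\int_{B_j}\phi$ is determined by the distribution functions of $\phi_0$ and $\psi_0$, and then passes to general $\psi_0$ by the monotone truncations $U_k$. You instead use the layer-cake identity together with the observation that similar ordering forces the superlevel sets $\{\phi>s\}$ and $\{\psi>t\}$ to be nested (your inclusion is in fact exact, not just modulo null sets: if $\phi(x)>s\ge\phi(y)$ and $\psi(y)>t\ge\psi(x)$, the product $(\phi(x)-\phi(y))(\psi(x)-\psi(y))$ is negative), which yields the closed formula $\int\phi\psi=\iint\min\bigl(\mu\{\phi>s\},\mu\{\psi>t\}\bigr)\,ds\,dt=\int_0^1\phi_0^\star\psi_0^\star$. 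For (b) the paper simply cites Bennett--Sharpley (Chapter 2, Theorems 2.2 and 2.6) and notes that their explicit maximizer for simple data is similarly ordered, a property preserved in the limit; you construct the extremizer yourself via a measure-preserving sorting map $\sigma$ with $\psi_0=\psi_0^\star\circ\sigma$. Finally, the paper deduces (a) formally from (b) and (c), whereas you identify both suprema with the explicit Hardy--Littlewood value. The trade-off: your route is self-contained and produces a usable closed expression for the common value, but its one non-formal ingredient is the splitting of the flat level sets of $\psi_0$ onto subintervals, which needs non-atomicity of $\mu$ (in this paper one can extract it from the measure-preserving map $\theta$ of Lemma 5.5 restricted to each level set); the paper's route avoids any sorting map in (c) at the cost of a longer bookkeeping argument, and outsources the attainment statement to [BS]. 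Do make the sign reductions explicit when you invoke the layer cake for the inequality in (a): after adding constants the supremum over $\phi\sim\phi_0$ shifts by a quantity depending only on $\phi_0,\psi_0$, so nothing is lost, but this should be said.
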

\begin{proof}(b)  That the suprema are attained, at least when $\phi_0,\psi_0\ge 0$, is proved in [BS, Chapter 2, Theorems
2.2 and 2.6]. 
The general result follows upon adding a constant to the functions. The proof in [BS, pp. 49-50], say, for the first 
supremum in (a),  
proceeds by first considering simple
$\phi_0$ and representing the maximizing $\phi$ by an explicit formula, then passing to a limit. 
The formula shows that $\phi$ and $\psi_0$ are similarly ordered when $\phi_0$ is simple; but similar ordering is 
preserved under pointwise limits, and must hold in general.

(c) Again, first assume that $\psi_0$ is simple, and takes values $a_1<a_2<\dots<a_k$. Let $A_j=\{x:\psi(x)=a_j\}$. If
necessary, we can change the values of $\phi,\psi$ on a set of zero measure to arrange that each $\mu(A_j)>0$. Let
\[
m_j=\inf_{A_j}\phi, \qquad M_j=\sup_{A_j}\phi.
\]
If $x\in A_j$ and $y\in A_{j+1}$, then $\psi(x)<\psi(y)$ and $\phi(x)\le\phi(y)$, hence
\begin{equation}
\ldots\le m_j\le M_j\le m_{j+1}\le\dots
\end{equation}
It follows that the set $B_j=\{x:m_j<\phi(x)<M_j\}$ is included in $A_j$. With 
$C_j=\{x\in A_j:\phi(x)=m_j\}$ and $D_j=\{x\in A_j: \phi(x)=M_j\}$ therefore
\[
\int\phi\psi=\sum_j a_j\int_{A_j}\phi=\sum_ja_j
\begin{cases}\int_{B_j}\phi+m_j\mu(C_j)+M_j\mu(D_j)&\text{ if } m_j<M_j\\m_j\mu(A_j)&\text{ if } m_j=M_j.\end{cases}
\]
We will show that each term on the right is determined by $\phi_0,\psi_0$.

To start,
\begin{equation}
m_j=\sup\Big\{m:\mu(\phi\ge m)\ge\mu\Big(\bigcup_{i=j}^k A_i\Big)\Big\},
\end{equation}
because by (7.1)
\[
\phi\le m_j\quad\text{on } \bigcup_{i=1}^{j-1}A_i,\qquad\phi \ge m_j\quad\text{on } \bigcup_{i=j}^k A_i.
\]
Since $\mu(\phi\ge m)=\mu(\phi_0\ge m)$ and $\mu\big(\bigcup_j^k A_i\big)=\mu(\psi_0\ge a_j)$, (7.2) shows that the 
$m_j$ are 
determined by $\phi_0,\psi_0$; and so are the $M_j$. It follows that
\[
\mu(B_j)=\mu(m_j<\phi_0<M_j)\quad\text{ and } \quad \int_{B_j}\phi=\int_{\{m_j<\phi_0<M_j\}}\phi_0
\]
are also determined by $\phi_0,\psi_0$. Next, $\mu(A_j)=\mu(\psi_0=a_j)$. Finally, if $j$ is such that $m_j< M_j$, then in
light of (7.1) $C_j=(\phi\le m_j)\setminus \bigcup_1^{j-1} A_i$,
\begin{align*}
\mu(C_j)&=\mu(\phi \le m_j)-\mu\big(\bigcup_1^{j-1}A_i\big)=\mu(\phi_0\le m_j)-\mu(\psi_0<a_j) \quad\text{ and}\\
\mu(D_j)&=\mu(A_j)-\mu(B_j)-\mu(C_j).
\end{align*}

This proves (c) for a simple $\psi_0$. To finish the proof, consider a general $\psi_0$. Let $\lfloor\ \ \rfloor$ denote integer part 
and for $k\in\bN, t\in\bR$ let $U_k(t)=\lfloor kt\rfloor/k$, an increasing function of $t$. By what we have proved
$\int (U_k\circ\psi)\phi$ is determined by $\phi_0,\psi_0$, hence so is (by the dominated convergence 
theorem)
\[
\int\phi\psi=\lim_{k\to \infty}\int (U_k\circ\psi)\phi.
\]

(a) now follows from (b) and (c).
\end{proof}
\begin{lem}
If $\phi\in L^1(X)$ and $\psi\in L^\infty(X)$ are similarly ordered, then $\int\phi\psi\ge\fint\phi\int\psi$.
\end{lem}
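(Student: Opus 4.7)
The plan is to exploit the defining inequality of similarly ordered functions pointwise on $X \times X$ and then integrate it against the product measure $\mu\otimes\mu$. This is a Chebyshev-type sum inequality argument.

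Concretely, since $\phi$ and $\psi$ are similarly ordered, for every $x,y\in X$ we have
\[
\bigl(\phi(x)-\phi(y)\bigr)\bigl(\psi(x)-\psi(y)\bigr)\ge 0.
\]
The function on the left is measurable on $X\times X$; it is also integrable with respect to $\mu\otimes\mu$ since $\phi\in L^1(X)$, $\psi\in L^\infty(X)$, and $\mu(X)<\infty$ (so all four cross terms below are integrable). Integrating over $X\times X$ and expanding the product, Fubini gives
\[
0\le \int_X\!\!\int_X\bigl(\phi(x)\psi(x)-\phi(x)\psi(y)-\phi(y)\psi(x)+\phi(y)\psi(y)\bigr)\,d\mu(x)\,d\mu(y)
= 2\mu(X)\int\phi\psi - 2\int\phi\int\psi.
\]
Dividing through by $2\mu(X)$ yields $\int\phi\psi\ge \fint\phi\int\psi$, as claimed.

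There is no real obstacle here: the only thing to verify carefully is integrability of the cross terms on $X\times X$, which is immediate from $\phi\in L^1(X)$, $\psi\in L^\infty(X)$, and $\mu(X)<\infty$. The argument does not use the absence of atoms, so it applies in the generality stated for Lemmas 7.2--7.5.
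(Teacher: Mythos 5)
Your proof is correct: the pointwise inequality $\bigl(\phi(x)-\phi(y)\bigr)\bigl(\psi(x)-\psi(y)\bigr)\ge 0$ is exactly the paper's definition of similarly ordered, each of the four terms in the expansion is $\mu\otimes\mu$--integrable because $\phi\in L^1$, $\psi\in L^\infty$ and $\mu(X)<\infty$, and dividing by $2\mu(X)$ gives $\int\phi\psi\ge\fint\phi\int\psi$. The paper offers no argument of its own here --- it simply cites the discrete Chebyshev sum inequality in Hardy--Littlewood--P\'olya and asserts that the lemma follows --- so your symmetrization over $X\times X$ is the same underlying Chebyshev idea, but carried out directly in the integral setting; it has the small advantage of being self-contained and of bypassing the approximation by simple functions implicit in deducing the integral version from the discrete one. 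Your observation that non-atomicity is not needed is also accurate.
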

\begin
{proof} This is Chebishev's  integral inequality.  See for the discrete
version of the inequality---from which the lemma follows---p. 43 in \cite{HLP}, and also p. 168.
\end{proof}

\begin{lem}
If $\phi_0,\psi\in L^\infty(X)$, then
\begin{equation}
\sup_{\phi\sim\phi_0}\int|\phi|\psi\le\sup_{\phi\sim\phi_0}\int\phi\psi+
\sup_{\phi\sim\phi_0}\int(-\phi)\psi+
\fint|\phi_0|\int\psi.
\end{equation}
\end{lem}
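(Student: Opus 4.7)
The plan is to reduce the inequality to a one-dimensional comparison on $[0,1]$ via decreasing rearrangements, decompose $|\phi_0|$ into its positive and negative parts, and conclude using Chebyshev's inequality (Lemma 7.3).

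Every quantity in the desired inequality depends only on the distributions of $\phi_0$ and $\psi$: by parts (b) and (c) of Lemma 7.2 the three suprema equal $\int_0^1|\phi_0|^\star\psi^\star$, $\int_0^1\phi_0^\star\psi^\star$ and $\int_0^1(-\phi_0)^\star\psi^\star$ respectively, while $\fint|\phi_0|$ and $\int\psi$ are obviously distributional invariants. Replacing $\phi_0,\psi$ by their decreasing rearrangements I may therefore assume $X=[0,1]$ with Lebesgue measure and that $\phi_0=g$ and $\psi=h$ are both non-increasing. Writing $B=g=\phi_0^\star$, $C=(-g)^\star$, $A=|g|^\star$ and $c=\int_0^1|g|$, the inequality to prove becomes
\[
\int_0^1 Ah\;\le\;\int_0^1 Bh+\int_0^1 Ch+c\int_0^1 h.
\]

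Decompose $|g|=g^++g^-$; the two summands are nonnegative and have disjoint supports since $g$ is non-increasing. For any $\phi\sim g$ one has $\phi^+\sim g^+$ and $\phi^-\sim g^-$, so splitting $|\phi|=\phi^++\phi^-$ and taking suprema independently yields
\[
\int_0^1 Ah\;\le\;\sup_{f\sim g^+}\int fh+\sup_{f\sim g^-}\int fh\;=\;\int_0^1(g^+)^\star h+\int_0^1(g^-)^\star h.
\]
Decreasing rearrangement commutes with the positive-part operation, since both $(f^+)^\star$ and $(f^\star)^+$ are decreasing and equidistributed with $f^+$; thus $(g^+)^\star=(g^\star)^+=B^+$, and the same principle applied to $-g$ yields $(g^-)^\star=((-g)^+)^\star=((-g)^\star)^+=C^+$. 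Substituting $B^+=B+B^-$ and $C^+=C+C^-$ turns the previous display into
\[
\int_0^1 Ah\;\le\;\int_0^1 Bh+\int_0^1 Ch+\int_0^1(B^-+C^-)h.
\]

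It remains to establish $\int_0^1(B^-+C^-)h\le c\int_0^1 h$. Since $B$ and $C$ are non-increasing, the negative parts $B^-$ and $C^-$ are nonnegative and non-decreasing, so $B^-+C^-$ is non-decreasing while $h$ is non-increasing; in particular, $B^-+C^-$ and $-h$ are similarly ordered. A direct computation gives $\int_0^1(B^-+C^-)=\int_0^1 g^-+\int_0^1 g^+=c$, so Chebyshev's inequality (Lemma 7.3) applied to $B^-+C^-$ and $-h$ produces $\int(B^-+C^-)(-h)\ge c\int(-h)$, equivalently $\int(B^-+C^-)h\le c\int h$, as required. The main obstacle is arriving at the identities $(g^+)^\star=B^+$ and $(g^-)^\star=C^+$; once this algebraic structure is in place, the remaining ingredients---sub-additivity of the sup and Chebyshev's inequality---are routine.
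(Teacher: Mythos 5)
Your proof is correct in substance and rests on the same two ideas as the paper's: split off the positive part via the identity $\phi^{+}=\phi+\phi^{-}$ (your $B^{+}=B+B^{-}$, $C^{+}=C+C^{-}$) and control the resulting correction term by Chebyshev's inequality (Lemma 7.3). The difference is in execution: the paper stays on $X$ and works with the maximizer $\phi_1\sim\phi_0$ that is similarly ordered with $\psi$ (Lemma 7.2(b)), observing that $\phi_1^{+}$ is then also similarly ordered with $\psi$ and hence realizes $\sup_{\phi\sim\phi_0}\int\phi^{+}\psi$; you instead transport everything to the interval by decreasing rearrangement. Your route is more concrete, but it creates one small gap. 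The identification of the left-hand side with $\int_0^1|\phi_0|^{\star}\psi^{\star}$ is not a direct consequence of Lemma 7.2(b),(c): that lemma computes $\sup_{\rho\sim|\phi_0|}\int\rho\psi$, whereas the left-hand side is the supremum over the smaller set of functions of the form $|\phi|$ with $\phi\sim\phi_0$. Only the inequality $\sup_{\phi\sim\phi_0}\int|\phi|\psi\le\int_0^1|\phi_0|^{\star}\psi^{\star}$ comes for free; equality would require knowing that every rearrangement of $|\phi_0|$ is $|\phi|$ for some $\phi\sim\phi_0$, which the paper establishes only for simple functions (inside the proof of Lemma 7.5) and which you do not address. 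This matters because your splitting step literally bounds $\sup_{\phi\sim\phi_0}\int|\phi|h$, not $\int_0^1 Ah$; to deduce $\int_0^1 Ah\le\int_0^1 B^{+}h+\int_0^1 C^{+}h$ from it you would need the unproved direction of that equality. The fix is immediate: do not introduce $A$ at all, and bound the original supremum directly by $\sup_{\phi\sim\phi_0}\int\phi^{+}\psi+\sup_{\phi\sim\phi_0}\int\phi^{-}\psi\le\int_0^1(g^{+})^{\star}h+\int_0^1(g^{-})^{\star}h$, after which your computation with $B^{\pm}$, $C^{\pm}$, the identity $\int_0^1(B^{-}+C^{-})=\fint|\phi_0|$, and Chebyshev goes through verbatim. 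With that repair the argument is complete.
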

\begin{proof}
First we estimate $\int\phi^+\psi$. By Lemma 7.2 we can choose $\phi_1\sim\phi_0$, similarly ordered as $\psi$, that
realizes $\sup_{\phi\sim\phi_0}\int\phi\psi$. It follows that $\phi_1^+$, a composition of $\phi_1$ with an increasing 
function, is also similarly ordered as $\psi$. Using Lemma 7.2 once more we obtain
\[
\sup_{\phi\sim\phi_0}\int\phi^+\psi=\int\phi_1^+\psi=\int\phi_1\psi+\int\phi_1^-\psi.
\]
As $-\phi_1^-$ and $\psi$ are similarly ordered, Lemma 7.3 gives $-\int\phi_1^-\psi\ge-\fint\phi_1^-\int\psi$, and so
\begin{equation}
\sup_{\phi\sim\phi_0}\int\phi^+\psi\le\int\phi_1\psi+\fint\phi_1^-\int\psi=\sup_{\phi\sim\phi_0}\int\phi\psi+\fint\phi_0^-\int\psi.
\end{equation}
Replacing $\phi_0$ with $-\phi_0$,
\begin{equation}
\sup_{\phi\sim\phi_0}\int\phi^-\psi\le\sup_{\phi\sim\phi_0}\int(-\phi)\psi+\fint\phi_0^+\int\psi,
\end{equation}
and (7.3) follows by adding (7.4) and (7.5).
\end{proof}

\begin{lem}
If $f_0,\xi\in L^\infty(X)$ then $\sup_{f\sim f_0}\int|f\xi|\le 4\sup_{f\sim f_0}|\int f\xi|+3\fint|f_0|\int|\xi|$.
\end{lem}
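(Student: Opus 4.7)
The plan is to apply Lemma 7.5 twice, once to strip the absolute value off $f$ and once, with the roles of $f$ and $\xi$ swapped, to reduce $|\xi|$ to $\xi$. Since $|f\xi|=|f|\cdot|\xi|$, applying Lemma 7.5 with $\phi_0=f_0$ and $\psi=|\xi|$ gives
\[
\sup_{f\sim f_0}\int|f\xi|\le\sup_{f\sim f_0}\int f|\xi|+\sup_{f\sim f_0}\int(-f)|\xi|+\fint|f_0|\int|\xi|.
\]
It remains to bound each of the first two suprema by $2\sup_{f\sim f_0}|\int f\xi|+\fint|f_0|\int|\xi|$; doing so produces the constants $4$ and $3$.

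For the crucial second step I would apply Lemma 7.5 with $\phi_0=\xi$ and $\psi=f_0$ to get
\[
\sup_{\xi'\sim\xi}\int|\xi'|f_0\le\sup_{\xi'\sim\xi}\int\xi'f_0+\sup_{\xi'\sim\xi}\int(-\xi')f_0+\fint|\xi|\int f_0.
\]
By Lemma 7.2(a) each supremum over $\xi'\sim\xi$ on the right becomes a supremum over $f\sim f_0$, namely $\sup_{f\sim f_0}\int\xi f$ and $\sup_{f\sim f_0}\int(-\xi)f$, each trivially bounded by $\sup_{f\sim f_0}|\int f\xi|$. The left side I would identify with $\sup_{f\sim f_0}\int f|\xi|$: Lemma 7.2(a) gives $\sup_{\phi\sim|\xi|}\int\phi f_0=\sup_{f\sim f_0}\int|\xi|f$, and because $(X,\mu)$ is non-atomic every $\phi\sim|\xi|$ arises as $|\xi'|$ for some $\xi'\sim\xi$ (assign signs on the level sets of $\phi$ so that the distribution of $\xi$ is matched). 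Finally, $|\fint|\xi|\int f_0|\le\fint|f_0|\int|\xi|$ since $|\int f_0|\le\int|f_0|$, yielding
\[
\sup_{f\sim f_0}\int f|\xi|\le 2\sup_{f\sim f_0}\Big|\int f\xi\Big|+\fint|f_0|\int|\xi|.
\]

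Running the same argument with $f_0$ replaced by $-f_0$ (and using $\sup_{g\sim-f_0}|\int g\xi|=\sup_{f\sim f_0}|\int f\xi|$) gives the identical bound for $\sup_{f\sim f_0}\int(-f)|\xi|$, and substituting both estimates into the opening display produces the required inequality. The main obstacle is the identity $\sup_{\xi'\sim\xi}\int|\xi'|f_0=\sup_{f\sim f_0}\int f|\xi|$, i.e.\ the legitimacy of the swap; it rests on combining Lemma 7.2(a) with the non-atomicity of $\mu$ used to lift a rearrangement of $|\xi|$ to a signed rearrangement of $\xi$. Everything else is a direct application of Lemmas 7.5 and 7.2 together with the bookkeeping of constants.
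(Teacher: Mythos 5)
Your overall route is the paper's: two applications of the averaging inequality of Lemma 7.4 (which you cite throughout as ``Lemma 7.5'', i.e.\ the statement being proved --- presumably a slip of numbering), linked by the duality of Lemma 7.2(a) and by the claim that rearrangements of $|\xi|$ are exactly the absolute values of rearrangements of $\xi$. The bookkeeping of constants and the estimate $|\fint|\xi|\int f_0|\le\fint|f_0|\int|\xi|$ are fine.

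The gap is precisely the step you single out as the main obstacle: for a general $\xi\in L^\infty(X)$ it is \emph{not} true that every $\phi\sim|\xi|$ can be written as $|\xi'|$ with $\xi'\sim\xi$. ``Assigning signs on the level sets of $\phi$'' only works when those level sets have positive measure. For example, take $X=[0,1]$ with Lebesgue measure, $\xi(x)=x-\tfrac12$ and $\phi(x)=x/2$; then $\phi\sim|\xi|$, but any $\xi'$ with $|\xi'|=\phi$ has the form $\xi'=\epsilon\phi$ with a measurable sign $\epsilon=\pm1$, and $\xi'\sim\xi$ (uniform on $[-\tfrac12,\tfrac12]$) would force $\{\epsilon=1\}$ to have relative measure exactly $\tfrac12$ in every subinterval of $[0,1]$, contradicting the Lebesgue density theorem. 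So the identification of index sets underlying your identity $\sup_{\xi'\sim\xi}\int|\xi'|f_0=\sup_{f\sim f_0}\int f|\xi|$ fails as stated (only the inequality $\le$ is automatic, and you need $\ge$). The paper repairs exactly this point by a two-stage reduction: it first takes $\xi$ simple, in which case any $\zeta\sim|\xi|$ is simple with level sets of positive measure and the sign-splitting is legitimate by non-atomicity (this is the explicit construction with the values $a_1,\dots,a_k$), and then deduces the lemma for general $\xi\in L^\infty(X)$ by uniform approximation with simple functions, all three terms of the inequality being stable under uniform perturbation of $\xi$ since $\sup_{f\sim f_0}\int|f|\,|\xi-\xi_m|\le\|\xi-\xi_m\|_\infty\int|f_0|$. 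With that reduction inserted, your argument coincides with the paper's proof.
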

\begin{proof} 
Let us start with a simple $\xi$.  Lemma 7.4, with $\phi_0=f_0$, $\psi=|\xi|$  gives 
\begin{equation}
\sup_{f\sim f_0}\int |f\xi|\le 2\sup_{f\sim f_0}\Big|\int f|\xi|\Big|+\fint|f_0|\int|\xi|.
\end{equation}

By Lemma 7.2 
\begin{equation}
\sup_{f\sim f_0}\int f|\xi|=\sup_{\zeta\sim|\xi|}\int f_0\zeta.
\end{equation}
Any $\zeta\sim |\xi|$ can be written as $\zeta=|\eta|$ with $\eta\sim \xi$. Indeed, suppose $\xi$ takes distinct values
$a_1,\dots,a_k$. If for some $i$ there is no $j$ with $a_i=-a_j$, we let $\eta\equiv a_i$ on the set $(\zeta=|a_i|)$. If
for some $i$ there is a (necessarily unique) $j$ with $a_i=-a_j$, for each such pair we divide the set $(\zeta=|a_i|=|a_j|)$ in
 two parts, of measures $\mu(\xi=a_i)$, $\mu(\xi=a_j)$, and define $\eta\equiv a_i$ on the former, $\eta\equiv a_j$ on the latter.
 
 Hence, applying Lemma 7.4 again, this time with 
 $\phi_0=\xi$, $\psi=f_0$, we obtain
 \[
 \sup_{\zeta\sim|\xi|}\int f_0\zeta=\sup_{\eta\sim \xi}\int f_0|\eta|\le 2\sup_{\eta\sim \xi}\Big|\int f_0\eta\Big|+\fint f_0\int|\xi|.
 \]
 In light of (7.7) and Lemma 7.2 therefore
 \[
 \sup_{f\sim f_0}\int f|\xi|\le 2\sup_{f\sim f_0}\Big|\int f\xi\Big|+\fint |f_0|\int|\xi|.
 \]
Substituting this, and its counterpart with $f_0$ replaced by $-f_0$, into (7.6) gives  the lemma, when $\xi$ is simple. A general
$\xi$ can be uniformly approximated by simple functions $\xi_m$, and knowing the estimate for each $\xi_m$ gives the 
estimate for $\xi$ in the limit.

\end{proof}

\begin{proof}[Proof of Theorem 7.1] By Lemma 2.1
$
p(\xi)=\sup\{\int f\xi\colon f\in\cF\}
$
with a family $\cF\subset L^\infty(X)$, that we can choose to be invariant under $\Ham(\omega)$. Because of Lemma 3.2
we can even choose it to be invariant under strict rearrangements. For any measurable 
$\zeta:X\to[-\infty,\infty]$ define
\[
q(\zeta)=\sup\Big\{\int|f \zeta|\colon f\in\cF\Big\}\in[0,\infty],
\]
and let $B=\{\zeta\colon q(\zeta)<\infty\}$, $\|\ \ \|=q|B$. Some obvious properties of $q$ are: it is positively homogeneous, 
$q(\eta+\zeta)\le q(\eta)+q(\zeta)$, and $|\eta|\le|\zeta|$ a.e. implies $q(\eta)\le q(\zeta)$. If $q(\zeta)=0$ then $\zeta=0$
a.e. on any set where some $f\in\cF$ is nonzero; since $\cF$ is invariant under strict rearrangements, this simply means 
$\zeta=0$ a.e. By Lemma 4.3  $\sup_{f\in\cF}\int|f|<\infty$, hence $L^\infty(X)\subset B$. 
Furthermore, $q$ is invariant under all rearrangements, strict or not; this also implies by Lemma 6.1, with a suitable $b>0$, 
\begin{equation}
q(\zeta)\ge b\int|\zeta|
\end{equation}
if $\zeta\in L^\infty(X)$.

Following Bennett--Sharpley's definition [BS, pp. 2, 59], $(B,\|\ \ \|)$ is a rearrangement invariant 
Banach space if, in addition to the properties above, (7.8) holds for all measurable $\zeta$, and
\begin{equation}
\lim_{k\to\infty}q(\zeta_k)=q(\zeta)
\end{equation}
for every increasing sequence $\zeta_k\ge 0$ converging to $\zeta$. We start with the latter.
On the one hand, since $q$ is monotone, the limit in (7.9) exists, and
is $\le q(\zeta)$. On the other, the monotone convergence theorem implies that with any $f\in\cF$
\[
\int |f\zeta|=\lim_{k\to\infty}\int|f\zeta_k|\le\lim_{k\to\infty}q(\zeta_k).
\]
Taking the sup over all $f\in\cF$ we obtain $q(\zeta)\le\lim_k q(\zeta_k)$, which proves (7.9). That (7.8) holds for all 
measurable $\zeta$ now follows because $|\zeta|$ is the limit of an increasing sequence of functions in $L^\infty(X)$.

It remains to verify that $p$ and $\|\ \ \|$ are equivalent on $C^\infty(X)$. Clearly $p\le\|\ \ \|$. 
By  Lemma 7.5 
$$
\|\xi\|=\sup_{f\in\cF}\int |f\xi|\le 4\sup_{f\in\cF}\Big|\int f\xi\Big|+3\sup_{f\in\cF}\fint|f|\int|\xi|,\qquad \xi\in C^\infty(X).
$$
Equivalence follows, because the first supremum on the right is $p(\xi)$ and the last term is $\le Cp(\xi)$ 
by Lemma 4.3 and Theorem 1.3.

\end{proof}

\section{Proof of Theorem 1.5}  

The construction of a smooth, $\Ham(\omega)$ invariant function $p:C^\infty(X)\to \bR$ that is not invariant under volume preserving diffeomorphisms is based on symplectic rigidity; but linear rigidity, the easy kind, suffices. Let $V$ be a $2n\ge 4$ dimensional 
sympletic vector space over $\bR$, and $\frak{Q}$ the vector space of quadratic forms 
$Q: V\to\bR$. Linear maps of $V$ act on $\fQ$ by composition.
It is easy to construct a smooth function $t:\fQ\to\bR$ that is invariant under the symplectic group $\text{Sp}(V)$, but not under $\text{SL}(V)$. For Poisson bracket $\{\ ,\ \}$ turns $\fQ$ into a Lie algebra, and induces the adjoint action ad$_Q : \fQ\to \fQ$, 
\[
\text{ad}_Q(R)=\{Q,R\}=(\text{sgrad}\,Q)R, \qquad Q,R\in\fQ.
\]
We let $t(Q)=\text{tr}\,\text{ad}^2_Q$. Thus $t$ is a polynomial on $\fQ$. If $V\to V'$ is an isomorphism of symplectic 
vector spaces under which quadratic forms $Q,Q'$ correspond, then $t(Q)=t(Q')$.

For example, suppose that $V$ is $\bR^{2n}$ with coordinates $x_\nu, y_\nu$ and symplectic form 
$\sum_1^n dx_\nu\wedge dy_\nu$. 
Consider
$$
Q(x,y)=\sum q_{\nu} x_\nu y_\nu, \qquad q_\nu\in\bR.
$$
As sgrad$\,Q=\sum_\nu q_\nu (x_\nu\partial_{x_\nu}-y_\nu\partial_{y_\nu})$, monomials 
$x_\lambda x_\mu$, $x_\lambda y_\mu$, and $y_\lambda y_\mu$ form an eigenbasis of $\text{ad}_Q$, with eigenvalues 
$q_\lambda+q_\mu$, resp. $q_\lambda-q_\mu$, resp. $-q_\lambda-q_\mu$. Hence
\begin{equation}\label{eq8.1}\begin{split}
t(Q)&=\sum_{\lambda\le \mu}(q_\lambda+q_\mu)^2+\sum_{\lambda, \mu}(q_\lambda-q_\mu)^2+
\sum_{\lambda\ge \mu}(q_\lambda+q_\mu)^2\\
&=\sum_{\lambda= \mu}(2q_\lambda)^2+\sum_{\lambda, \mu}\big((q_\lambda+q_\mu)^2+(q_\lambda-q_\mu)^2\big)\\
&=4\sum_\lambda q_\lambda^2+2\sum_{\lambda, \mu}(q_\lambda^2+q_\mu^2)=(4n+4)\sum_\lambda q^2_\lambda.
\end{split}\end{equation}
Note that $Q$ and $R=\sum r_\nu x_\nu y_\nu$ are on the same SL$(V)$ orbit whenever $\prod q_\nu=\prod r_\nu$. We conclude $t$ is not SL$(V)$ invariant.

We need to introduce one more player. If a general quadratic form $Q:V\to\bR$ is written in a symplectic basis $z_\nu$, 
$\nu=1,\ldots,2n$, 
as $Q(z)=\sum a_{\lambda\nu}z_\lambda z_\nu$, with $a_{\lambda\nu}=a_{\nu\lambda}$, we let
$$
\Det Q=\det(a_{\lambda\nu}).
$$
Thus $\Det Q$ is independent of the choice of basis, and is even SL$(V)$ invariant.

Fix a smooth function $\varphi\colon \bR\to\bR$ such that $\varphi(s)=0$ for $|s|\le 1/2$ and $\varphi(s)=s$ for $|s|\ge 1$. 
If $\xi\in C^\infty(X)$ 
and $x$ is a critical point of $\xi$, let $Q_x=Q_{\xi,x}$ stand for the quadratic Taylor polynomial of $\xi-\xi(x)$ at $x$, a 
quadratic form on the symplectic vector space $T_xX$ (the Hessian). Given $\var>0$, critical points  $x$ of $\xi$ for which 
$|\Det Q_x|\ge\var$
form a discrete and compact, hence finite set. In particular $\xi$ has countably many nondegenerate critical points, that we denote 
$x_i$. Define
$p\colon C^\infty(X)\to\bR$ by letting
\begin{equation}
p(\xi)=\sum_i\varphi(\Det Q_{x_i})t(Q_{x_i});
\end{equation}
 we are summing over all nondegenerate critical points $x_i$ of $\xi$, or only over those for which $|\Det Q_{x_i}|>1/2$. We claim that $p$ is smooth.
 
Indeed, given $\eta\in C^\infty(X)$, let $C$ consist of its critical points $y$ for which $|\Det Q_y|\le 1/4$, a compact subset 
of $X$, and let $y_i$,
$1\le i\le k$ denote the rest of its critical points. It is possible that $k=0$, and
even that $\eta$ has no nondegenerate critical point at all.
About each $y_i$ construct a neighborhood $U_i$ so that the only critical point within $\oU_i$ is $y_i$. About each 
$y\in C$ 
construct a neighborhood $V\subset X$ with local coordinates $z_1,\ldots,z_{2n}$ so that
$\omega|V=\sum_\nu dz_\nu\wedge dz_{n+\nu}$. Let $U\subset\subset V$ be a neighborhood of $y$ consisting of 
$x$ such that
the quadratic form $Q(z)=\sum\partial_\lambda\partial_\nu\eta(x)z_\lambda z_\nu$ has determinant $|\Det Q|<1/3$. Choose a finite
 cover $\{U_{k+1},\ldots,U_l\}$ of $C$ by such neighborhoods $U$.
If $\xi\in C^\infty(X)$ is in a sufficiently small neighborhood of $\eta$,
\begin{itemize}
\item[] \ \ in each $\oU_j$, $j\le k$, $\xi$ has a single critical point, which depends smoothly on $\xi$;
\item[] \ \ all critical points $x$ of $\xi$ in $\bigcup_{j>k}\oU_j$ satisfy $|\Det Q_{\xi,x}|<1/2$; and
\item[] \ \ $\xi$ has no critical points outside $\bigcup_1^l \oU_j$.
\end{itemize}
Therefore $p$ in (8.2) is a smooth function in this neighborhood of $\eta$, hence everywhere. 

Invariance of $\Det$ and $t$ implies that $p$ is $\Ham(\omega)$ invariant. It is, however, not invariant under general 
volume preserving diffeomorphisms for the following reason. Fix a coordinate system $x_\nu, y_\nu$ on an open 
$W\subset X$, centered at some $o\in W$, such that $\omega|W=\sum dx_\nu \wedge dy_\nu$. Let $\xi\in C^\infty(X)$ be given 
by $\xi=2\sum x_\nu y_\nu$ on $W$.

The local flow of a vector field $v=\sum a_\nu(x,y)\partial_{x_\nu}+b_\nu(x,y)\partial_{y_\nu}$ preserves $\omega^n$ if and only if 
$\text{div}\, v=0$; that is, if the $(2n-1)$--form
\[
\alpha=\sum_\nu(a_\nu dx_\nu-b_\nu dy_\nu)\wedge\bigwedge\limits_{\lambda\ne \nu} dx_\lambda\wedge dy_\lambda
\]
is closed, or if locally $\alpha=d\beta$. This shows that the germ of any volume preserving flow at $o$ can be continued to a volume preserving flow on all of $X$, that will be supported in our coordinate neighborhood. With 
$c_\nu\in\bR$ consider the germ of a diffeomorphism at $o$
\begin{equation}  
(x,y)\mapsto (e^{c_\nu} x_\nu, e^{c_\nu}y_\nu)_{1\le \nu\le n}.
\end{equation}
This is the time 1 map of a volume preserving flow if $\sum c_\nu=0$. If so, there is a volume preserving diffeomorphism $g:X\to X$, 
supported in $W$, 
whose germ at $o$ is (8.3). Now $\xi$ and $\eta=\xi\circ g$ have the same critical points, and even their germs agree at all
critical points except possibly at $o$.  Hence the contributions to $p(\xi)$ and $p(\eta)$ of critical points different from $o$ are the
same. At $o$ 
$$
Q_{\xi,o}=\xi=2\sum x_\nu y_\nu, \qquad Q_{\eta,o}=\eta=2\sum e^{2c_\nu}x_\nu y_\nu.
$$
This means that $\Det Q_{\xi,o}=\Det Q_{\eta,o}=\pm 1$, while in general, in view of (8.1)
$$
t(Q_{\xi,o})=4(4n+4)n \neq 4(4n+4)\sum e^{4c_\nu}=t(Q_{\eta,o}).
$$
Therefore $p(\xi)\neq p(\eta)$, as claimed.

Note also that $p$ is discontinuous in the sup norm topology, since arbitrarily $\|\ \ \|_\infty$--close to 
$0\in  C^\infty(X)$ there are $\xi$ with a unique nondegenerate critical point $x$, where the Hessian $Q_{\xi,x}$ can
be arbitrarily prescribed.

\end{document}